\journal{Eur. J. Oper. Res.}
\begin{document}

\theoremstyle{plain}
\newtheorem{theorem}{Theorem}
\newtheorem{lemma}[theorem]{Lemma}
\newtheorem{proposition}[theorem]{Proposition}
\newtheorem{corollary}[theorem]{Corollary}
\newtheorem{fact}[theorem]{Fact}
\newtheorem*{main}{Main Theorem}

\theoremstyle{definition}
\newtheorem{definition}[theorem]{Definition}
\newtheorem{example}[theorem]{Example}

\theoremstyle{remark}
\newtheorem*{conjecture}{Conjecture}
\newtheorem{remark}{Remark}
\newtheorem{claim}{Claim}

\newcommand{\N}{\mathbb{N}}
\newcommand{\R}{\mathbb{R}}
\newcommand{\I}{\mathbb{I}}
\newcommand{\Vspace}{\vspace{2ex}}
\newcommand{\bfx}{\mathbf{x}}
\newcommand{\bfy}{\mathbf{y}}
\newcommand{\bfh}{\mathbf{h}}
\newcommand{\bfe}{\mathbf{e}}
\newcommand{\Q}{Q}

\begin{frontmatter}



\title{Weighted Banzhaf power and interaction indexes through weighted approximations of games}


\author[1]{Jean-Luc Marichal}
\ead{jean-luc.marichal[at]uni.lu}

\author[1]{Pierre Mathonet}
\ead{pierre.mathonet[at]uni.lu}

\address[1]{Mathematics Research Unit, FSTC, University of Luxembourg\\ 6, rue Coudenhove-Kalergi, L-1359 Luxembourg, Luxembourg.}

\begin{abstract}
The Banzhaf power index was introduced in cooperative game theory to measure the real power of players in a game. The Banzhaf interaction index
was then proposed to measure the interaction degree inside coalitions of players. It was shown that the power and interaction indexes
can be obtained as solutions of a standard least squares approximation problem for pseudo-Boolean functions. Considering certain weighted
versions of this approximation problem, we define a class of weighted interaction indexes that generalize the Banzhaf interaction index. We
show that these indexes define a subclass of the family of probabilistic interaction indexes and study their most important properties. Finally,
we give an interpretation of the Banzhaf and Shapley interaction indexes as centers of mass of this subclass of
interaction indexes.
\end{abstract}

\begin{keyword}
Cooperative game \sep pseudo-Boolean function \sep power index \sep interaction index \sep least squares approximation.

\MSC[2010] 91A12 \sep 93E24 (primary)\sep 39A70 \sep 41A10 (secondary)

\end{keyword}

\end{frontmatter}



\section{Introduction}

In cooperative game theory, various kinds of power indexes are used to measure the influence that a given player has on the outcome of the
game or to define a way of sharing the benefits of the game among the players. The best known power indexes are due to
Shapley~\cite{Sha53,ShaShu54} and Banzhaf~\cite{Ban65,DubSha79}. However, there are many other examples of such indexes in the literature; see
for instance \cite{AloCasHolLor08,DeePac78,Web88}.

When one is concerned by the analysis of the behavior of players in a game, the information provided by power indexes might be far insufficient,
for instance due to the lack of information on how the players interact within the game. The notion of interaction index was then introduced to
measure an interaction degree among players in coalitions. The first proposal goes back to Owen~\cite{Owe72} who defined the ``co-value'' of a pair of players $\{i,j\}$ in a game $v$ on $N=\{1,\ldots,n\}$ as an average over all coalitions $S\subseteq N\setminus\{i,j\}$ of the quantity
$$
v(S\cup\{i,j\})-v(S\cup\{i\})-v(S\cup\{j\})+v(S).\footnote{Intuitively, this quantity measures how the players $i$ and $j$ interact in the presence of $S$. In contrast, the ``coalitional power'' of the pair $\{i,j\}$ in the presence of $S$ is measured by an average value of $v(S\cup\{i,j\})-v(S)$ (see~\cite{MarKojFuj07}).}
$$
This definition was rediscovered and interpreted as an interaction index by Murofushi and Soneda~\cite{MurSon93}. A systematic approach was then initiated by Grabisch~\cite{Gra97b,Gra97a} and Roubens~\cite{Rou96} and led to the
definition of the Shapley and Banzhaf interaction indexes as well as many others. For general background, see Fujimoto et
al.~\cite{FujKojMar06}.

There is no universal power or interaction index that can be used in every single practical situation. The choice of such an index often depends
on the problem under consideration. Several axiomatizations of power and interaction indexes have then been proposed thus far (see
\cite{DubSha79,Sha53} for power indexes and \cite{FujKojMar06,GraRou99} for interaction indexes).

In addition to being axiomatized, the Banzhaf and Shapley power indexes were shown to be solutions of simple least squares approximation
problems:

\begin{itemize}
\item Charnes et al.~\cite{ChaGolKeaRou88} considered the problem of finding the best efficient (hence constrained) approximation of a given
game by an additive game in the sense of weighted least squares. They showed that the Shapley power index appears as the unique solution of the
approximation problem for a specified choice of the weight system over the coalitions. By considering all the possible weights in the approximation
problem, they defined the class of \emph{weighted Shapley values}.

\item Hammer and Holzman~\cite{HamHol92} considered the problem of approximating a pseudo-Boolean function by another pseudo-Boolean function of
smaller degree in the sense of standard (non-weighted and non-constrained) least squares. They showed that the Banzhaf power index appears as the coefficients of
the linear terms in the solution of the approximation problem by functions of degree at most one. Later, this problem was generalized by
Grabisch et al.~\cite{GraMarRou00} who showed that the Banzhaf interaction index appears as the leading coefficients of the best least squares
approximations by functions of specified degrees.
\end{itemize}

A natural way to generalize the non-weighted approach of Hammer and Holzman (we recall it in Section 2) consists in adding the following weighted,
probabilistic viewpoint: A weight $w(S)$ is assigned to every coalition $S$ of
players and interpreted as the probability that coalition $S$ forms.\footnote{This probabilistic approach was considered for instance in \cite{ChaGolKeaRou88,Cur87,Owe72}.} On this issue, we note that the weighted least squares problem associated with the probability distribution $w$ was studied in Ding et al.~\cite{DinLaxCheChe08,DinLaxCheChe10} in the special case when the players behave independently of each other to form coalitions.

In Section 3 we briefly recall the setting and main results of the approximation problem considered by Ding et al.~\cite{DinLaxCheChe08,DinLaxCheChe10}. We then introduce a weighted Banzhaf interaction index associated with $w$ by considering, as in Hammer and
Holzman's approach, the leading coefficients of the approximations of specified degrees. We also derive explicit expressions for this index, which allow us to generalize some of the results in \cite{DinLaxCheChe10}.

In Section 4 we investigate the main properties of this new class of indexes. For instance we prove that they define a subclass of the family of so-called probabilistic interaction indexes introduced in Fujimoto et al.~\cite{FujKojMar06}, we analyze their behavior with respect to null players and dummy coalitions, and we describe their symmetric versions.

Finally, in Section 5 we discuss interpretations of the Banzhaf and Shapley interaction indexes as centers of mass of weighted Banzhaf interaction indexes and we introduce an absolute interaction index associated to each weighted Banzhaf interaction index, which allows us to compute the coefficient of determination of the best $k$th approximations.

\section{Interaction indexes}

In this section we recall the concepts of power and interaction indexes introduced in cooperative game theory and how the Banzhaf index
can be obtained from the solution of a standard least squares approximation problem.

Recall that a (\emph{cooperative})~\emph{game} on a finite set of players $N=\{1,\ldots,n\}$ is a set function $v\colon 2^N\to\R$ which assigns
to each coalition $S$ of players a real number $v(S)$ representing the \emph{worth} of $S$.\footnote{Usually, the condition $v(\varnothing)=0$
is required for $v$ to define a game. However, we do not need this restriction in the present paper.} Through the usual identification of the
subsets of $N$ with the elements of $\{0,1\}^n$, a game $v\colon 2^N\to\R$ can be equivalently described by a pseudo-Boolean function
$f\colon\{0,1\}^n\to\R$. The correspondence is given by $v(S)=f(\mathbf{1}_S)$ and
\begin{equation}\label{eq:pBfPF}
f(\bfx)=\sum_{S\subseteq N} v(S)\,\prod_{i\in S}x_i\,\prod_{i\in N\setminus S}(1-x_i).
\end{equation}
To avoid cumbersome notation, we will henceforth use the same symbol to denote both a given pseudo-Boolean function and its underlying set
function (game), thus writing $f\colon\{0,1\}^n\to\R$ or $f\colon 2^N\to\R$ indifferently.

Equation~(\ref{eq:pBfPF}) shows that any pseudo-Boolean function $f\colon\{0,1\}^n\to\R$ can always be represented by a multilinear polynomial
of degree at most $n$ (see \cite{HamRud68}), which can be further simplified into
\begin{equation}\label{eq:fMob}
f(\bfx)=\sum_{S\subseteq N} a(S)\,\prod_{i\in S}x_i\, ,
\end{equation}
where the set function $a\colon 2^N\to\R$, called the \emph{M\"obius transform} of $f$, is defined by
$$
a(S)=\sum_{T\subseteq S} (-1)^{|S|-|T|}\, f(T).
$$

Let $\mathcal{G}^N$ denote the set of games on $N$. A \emph{power index} \cite{Sha53} on $N$ is a function $\phi\colon\mathcal{G}^N\times
N\to\R$ that assigns to every player $i\in N$ in a game $f\in\mathcal{G}^N$ his/her prospect $\phi(f,i)$ from playing the game.
An \emph{interaction index} \cite{GraRou99} on $N$ is a function $I\colon\mathcal{G}^N\times 2^N\to\R$ that measures in a game
$f\in\mathcal{G}^N$ the interaction degree among the players of a coalition $S\subseteq N$.

For instance, the \emph{Banzhaf interaction index} \cite{GraRou99} of a coalition $S\subseteq N$ in a game $f\in\mathcal{G}^N$ is defined by
$$
I_\mathrm{B}(f,S)=\sum_{T\supseteq S} \Big(\frac 12\Big)^{|T|-|S|} a(T)
$$
and the \emph{Banzhaf power index} \cite{DubSha79} of a player
$i\in N$ in a game $f\in\mathcal{G}^N$ is defined by $\phi_\mathrm{B}(f,i)=I_\mathrm{B}(f,\{i\})$.

It is noteworthy that $I_\mathrm{B}(f,S)$ can be interpreted as an average of the \emph{$S$-difference} (or \emph{discrete $S$-derivative})
$\Delta^Sf$ of $f$. Indeed, it can be shown (see \cite[{\S}2]{GraMarRou00}) that
\begin{equation}\label{DiscBanzhaf}
I_\mathrm{B}(f,S)=\frac 1{2^n}\sum_{\bfx\in\{0,1\}^n}\Delta^Sf(\bfx) = \frac{1}{2^{n-|S|}}\sum_{T\subseteq N\setminus S} (\Delta^Sf)(T),
\end{equation}
where $\Delta^Sf$ is defined inductively by $\Delta^{\varnothing}f=f$ and $\Delta^Sf=\Delta^{\{i\}}\Delta^{S\setminus\{i\}}f$ for $i\in S$, with
$\Delta^{\{i\}}f(\mathbf{x})=f(\mathbf{x}\mid x_i=1)-f(\mathbf{x}\mid x_i=0)$.

By extending formally any pseudo-Boolean function $f$ to $[0,1]^n$ by linear interpolation, we can define the \emph{multilinear extension} of
$f$ (see Owen~\cite{Owe72,Owe88}), that is, the multilinear polynomial $\bar{f}\colon [0,1]^n\to\R$ defined by
$$
\bar{f}(\bfx)=\sum_{S\subseteq N} f(S)\,\prod_{i\in S}x_i\,\prod_{i\in N\setminus S}(1-x_i)=\sum_{S\subseteq N} a(S)\,\prod_{i\in S}x_i\, .
$$

By extending also the concept of $S$-difference to the multilinear polynomials defined on $[0,1]^n$, we also have the following identities
(see~\cite{Owe88})
\begin{equation}\label{eq:we7r89}
I_\mathrm{B}(f,S)=\textstyle{(\Delta^S\bar{f})\big(\boldsymbol{\frac 12}\big)}=\displaystyle{\int_{[0,1]^n}\Delta^S\bar{f}(\bfx)\, d\bfx},
\end{equation}
where $\boldsymbol{\frac 12}$ stands for $\big(\frac 12,\ldots,\frac 12\big)$.

Since the $S$-difference operator $\Delta^S$ has the same effect as the $S$-derivative operator $D^S$ (i.e., the partial derivative operator
with respect to the variables in $S$) when applied to multilinear polynomials defined on $[0,1]^n$, we also have
\begin{equation}\label{eq:we7r89a}
I_\mathrm{B}(f,S)=\textstyle{(D^S\bar{f})\big(\boldsymbol{\frac 12}\big)}=\displaystyle{\int_{[0,1]^n}D^S\bar{f}(\bfx)\, d\bfx}.
\end{equation}

We now recall how the Banzhaf interaction index can be obtained from a least squares approximation problem, as investigated by Hammer and
Holzman~\cite{HamHol92} and Grabisch et al.~\cite{GraMarRou00}. For $k\in\{0,\ldots,n\}$, denote by $V_k$ the set of all multilinear polynomials
$g\colon\{0,1\}^n\to\R$ of degree at most $k$, that is of the form
$$
g(\bfx)=\sum_{\textstyle{S\subseteq N\atop |S|\leqslant k}} c(S)\prod_{i\in S}x_i\, ,
$$
where the coefficients $c(S)$ are real numbers. For a given pseudo-Boolean function $f\colon\{0,1\}^n\to\R$, the best $k$th approximation of $f$
is the unique multilinear polynomial $f_k\in V_k$ that minimizes the squared distance
\begin{equation}\label{eq:NonWeiDist}
\sum_{\bfx\in\{0,1\}^n}\big(f(\bfx)-g(\bfx)\big)^2=\sum_{T\subseteq N}\big(f(T)-g(T)\big)^2
\end{equation}
among all functions $g\in V_k$. A closed-form expression of $f_k$ was given in \cite{HamHol92} for $k=1$ and $k=2$ and in \cite{GraMarRou00} for
arbitrary $k\leqslant n$. In fact, when $f$ is given in its multilinear form (\ref{eq:fMob}) we obtain
\begin{equation}\label{eq:fkMob}
f_k(\bfx)=\sum_{\textstyle{S\subseteq N\atop |S|\leqslant k}} a_k(S)\prod_{i\in S}x_i,
\end{equation}
where
\begin{equation}\label{eq:aks1}
a_k(S)=a(S)+(-1)^{k-|S|}\sum_{\textstyle{T\supseteq S\atop |T|>k}}{|T|-|S|-1\choose k-|S|}\,\Big(\frac 12\Big)^{|T|-|S|}a(T).
\end{equation}
It is then easy to see that
\begin{equation}\label{shgfsdf}
I_\mathrm{B}(f,S)=a_{|S|}(S).
\end{equation}
Thus $I_\mathrm{B}(f,S)$ is exactly the coefficient of the monomial $\prod_{i\in S}x_i$ in the best approximation of $f$ by a multilinear
polynomial of degree at most $|S|$.

\section{Weighted Banzhaf interaction indexes}

The approximation problem described in the previous section uses the standard (non-weighted) Euclidean distance (\ref{eq:NonWeiDist}), for
which all the subsets (or coalitions of players) are considered on the same footing. Now, suppose that some coalitions are more important than
some others, for instance because they are more likely to form. To take these importances into consideration, it is natural to generalize the approximation problem by considering an appropriate weighted Euclidean distance. Thus modified, this approximation problem will then allow us to define a concept of weighted Banzhaf interaction index.

This weighted approximation problem was actually presented and solved (under the independence assumption) in Ding et al.~\cite{DinLaxCheChe08,DinLaxCheChe10}. We now briefly recall the setting of this problem as well as some of the most relevant results.

Given a weight function $w\colon\{0,1\}^n\to\left]0,\infty\right[$ and a pseudo-Boolean function $f\colon\{0,1\}^n\to\R$, we define
the \emph{best $k$th approximation of $f$} as the unique multilinear polynomial $f_k\in V_k$ that minimizes the squared distance
\begin{equation}\label{eq:WeiDist}
\sum_{\bfx\in\{0,1\}^n}w(\bfx)\big(f(\bfx)-g(\bfx)\big)^2=\sum_{S\subseteq N}w(S)\big(f(S)-g(S)\big)^2
\end{equation}
among all functions $g\in V_k$.

Clearly, we can assume without loss of generality that the weights $w(S)$ are (multiplicatively) normalized so that $\sum_{S\subseteq N}w(S)=1$. We then immediately see that the weights define a probability distribution over $2^N$ and we can interpret $w(S)$ as the probability that coalition $S$ forms, that is, $w(S) =
\Pr(C=S)$, where $C$ denotes a random coalition.

Now, suppose that the players behave independently of each other to form coalitions, which means that the events $(C\ni i)$, for $i\in N$, are
independent. In this case, also the indicator random variables $X_i=\mathrm{Ind}(C\ni i)$, for $i\in N$, are independent. Setting $p_i=\Pr(C\ni
i)=\sum_{S\ni i}w(S)$, we then have $p_i=\Pr[X_i=1]=E[X_i]$, $0<p_i<1$, and
$$
w(S) = \prod_{i\in S}p_i\prod_{i\in N\setminus S}(1-p_i)
$$
or, equivalently,
$$
w(\bfx)=\prod_{i\in N}p_i^{x_i}(1-p_i)^{1-x_i}.
$$

\begin{remark}\label{rem:MLE757}
This interpretation of $w(T)$ as a probability is precisely the one proposed by Owen~\cite{Owe88} in the interpretation of the multilinear
extension of a game as an expected value: Given a game $f\colon 2^N\to\R$, we have
$$
\bar{f}(p_1,\ldots,p_n)=\sum_{S\subseteq N}w(S)f(S)=E[f(C)],
$$
where $C$ is a random coalition.
\end{remark}

The set $V_k$ is clearly a linear space of dimension $\sum_{s=0}^k {n\choose s}$ spanned by the basis $B_k=\{u_S : S\subseteq N, \, |S|\leqslant k\}$, where the functions
$u_S\colon \{0,1\}^n\to\R$ (called unanimity games in game theory) are defined by $u_S(\bfx)=\prod_{i\in S}x_i$. Note that the distance defined in (\ref{eq:WeiDist}) is the natural $L^2$-distance associated with the measure $w$ and corresponds to the weighted Euclidean inner product
$$
\langle f,g\rangle=\sum_{\bfx\in\{0,1\}^n}w(\bfx)f(\bfx)g(\bfx).
$$
Thus the solution of this approximation problem exists and is uniquely determined by the orthogonal projection of $f$ onto $V_k$. This
projection can be easily expressed in any orthonormal basis of $V_k$. In this respect, it was shown in ~\cite{DinLaxCheChe10} that the set
$B'_k=\{v_S : S\subseteq N, \, |S|\leqslant k\}$, where $v_S\colon\{0,1\}^n\to\R$ is given by
$$
v_S(\bfx)=\prod_{i\in S}\frac{x_i-p_i}{\sqrt{p_i(1-p_i)}}=\sum_{T\subseteq S}\frac{\prod_{i\in S\setminus T}(-p_i)}{\prod_{i\in
S}\sqrt{p_i(1-p_i)}}\, u_T(\bfx)
$$
forms such an orthonormal basis for $V_k$.

The following immediate theorem gives the components of the best $k$th approximation of a pseudo-Boolean function $f\colon\{0,1\}^n\to\R$ in the
basis $B'_k$.
\begin{theorem}\label{thm:Coeffk}\cite[Theorem 4]{DinLaxCheChe10}
The best $k$th approximation of $f\colon\{0,1\}^n\to\R$ is the function
\begin{equation}\label{eq:akS2}
f_k=\sum_{\textstyle{T\subseteq N\atop |T|\leqslant k}}\langle f,v_T\rangle \, v_T\, .
\end{equation}
\end{theorem}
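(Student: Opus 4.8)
The plan is to recognize this statement as nothing more than the classical formula for the orthogonal projection onto a finite-dimensional subspace, written out in an orthonormal basis; the proof is therefore a short Hilbert-space argument that leans on the orthonormality of $B'_k$ recalled just above.

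First I would note that, since $w>0$ on all of $\{0,1\}^n$, the bilinear form $\langle f,g\rangle=\sum_{\bfx\in\{0,1\}^n}w(\bfx)f(\bfx)g(\bfx)$ is a genuine inner product (positive \emph{definite}, not merely semidefinite) on the finite-dimensional space of pseudo-Boolean functions, with induced norm $\|f\|^2=\langle f,f\rangle$. By definition the best $k$th approximation is the $g\in V_k$ minimizing $\|f-g\|^2$, and $V_k$ is a linear subspace; hence a minimizer exists, is unique, and is the orthogonal projection of $f$ onto $V_k$.

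Next I would exhibit the candidate. Using the fact recalled above from \cite{DinLaxCheChe10} that $B'_k=\{v_T : T\subseteq N,\ |T|\leqslant k\}$ is an orthonormal basis of $V_k$, i.e.\ $\langle v_S,v_T\rangle=\delta_{S,T}$ for $|S|,|T|\leqslant k$, set $g_0=\sum_{|T|\leqslant k}\langle f,v_T\rangle\,v_T\in V_k$. Then for every $S\subseteq N$ with $|S|\leqslant k$,
\[
\langle f-g_0,v_S\rangle=\langle f,v_S\rangle-\sum_{|T|\leqslant k}\langle f,v_T\rangle\,\langle v_T,v_S\rangle=\langle f,v_S\rangle-\langle f,v_S\rangle=0,
\]
so $f-g_0$ is orthogonal to every element of the basis $B'_k$, hence to all of $V_k$.

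Finally I would invoke the Pythagorean theorem: for an arbitrary $g\in V_k$, write $f-g=(f-g_0)+(g_0-g)$, where $g_0-g\in V_k$ while $f-g_0\perp V_k$; this gives $\|f-g\|^2=\|f-g_0\|^2+\|g_0-g\|^2\geqslant\|f-g_0\|^2$, with equality if and only if $g=g_0$. Hence $f_k=g_0$, which is exactly \eqref{eq:akS2}. There is no genuine obstacle in this argument; the one nontrivial ingredient is the orthonormality of $B'_k$, which has already been established, and the only subtlety worth flagging is that the strict positivity of $w$ is precisely what guarantees that $\langle\cdot,\cdot\rangle$ is definite and therefore that $f_k$ is \emph{unique}.
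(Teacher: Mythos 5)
Your proof is correct and takes exactly the route the paper intends: the paper labels the theorem ``immediate'' because it has already observed that the minimizer is the orthogonal projection onto $V_k$ and that $B'_k$ is an orthonormal basis, and your argument simply writes out the standard projection-in-an-orthonormal-basis computation (orthogonality of $f-g_0$ to $V_k$ plus the Pythagorean identity) that justifies that claim. Nothing is missing; the remark that strict positivity of $w$ gives definiteness, hence uniqueness, is a worthwhile detail the paper leaves implicit.
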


By expressing the functions $v_T$ in the basis $B_k$, we immediately obtain the following expression of $f_k$ in terms of the functions $u_S$:
\begin{equation}\label{eq:akS2bis}
f_k= \sum_{\textstyle{S\subseteq N\atop |S|\leqslant k}}
a_k(S)\, u_S\, ,
\end{equation}
where
\begin{equation}\label{eq:akS}
a_k(S)=\sum_{\textstyle{T\supseteq S\atop |T|\leqslant k}} \frac{\prod_{i\in T\setminus S}(-p_i)}{\prod_{i\in T}\sqrt{p_i(1-p_i)}} \, \langle
f,v_T\rangle.
\end{equation}

Let $\mathbf{p}$ stand for $(p_1,\ldots,p_n)$. By analogy with (\ref{shgfsdf}), in order to measure the interaction degree among players in a
game $f\colon\{0,1\}^n\to\R$, we naturally define an index $I_{\mathrm{B},\mathbf{p}}\colon\mathcal{G}^N\times 2^N\to\R$ as
$I_{\mathrm{B},\mathbf{p}}(f,S)=a_{|S|}(S)$, where $a_{|S|}(S)$ is obtained from $f$ by (\ref{eq:akS}). We will see in the next section that
this index indeed measures a power degree when $|S|=1$ and an interaction degree when $|S|\geqslant 2$.

\begin{definition}
Let $I_{\mathrm{B},\mathbf{p}}\colon\mathcal{G}^N\times 2^N\to\R$ be defined as
$$
I_{\mathrm{B},\mathbf{p}}(f,S)= \frac{\langle f,v_S\rangle}{\prod_{i\in S}\sqrt{p_i(1-p_i)}}\, ,
$$
that is,
\begin{equation}\label{eq:GenBI1}
I_{\mathrm{B},\mathbf{p}}(f,S)= \frac{1}{\prod_{i\in S}p_i(1-p_i)}\sum_{\bfx\in\{0,1\}^n}w(\bfx)f(\bfx)\prod_{i\in S}(x_i-p_i).
\end{equation}
\end{definition}

Clearly, formula (\ref{eq:GenBI1}) can be immediately rewritten as a sum over subsets as follows:
\begin{equation}\label{eq:GenBI1a}
I_{\mathrm{B},\mathbf{p}}(f,S)=\sum_{T\subseteq N}(-1)^{|S\setminus T|}\, f(T)\,\prod_{i\in T\setminus
S}p_i\prod_{i\in N\setminus(T\cup S)}(1-p_i).
\end{equation}

\begin{remark}
The definition of the index $I_{\mathrm{B},\mathbf{p}}$ is close to that of the transformation $T$ considered in Ding et al.~\cite{DinLaxCheChe10}, where the components of $T(f)$ are defined by $\alpha_S(f)=\langle f,v_S\rangle$. However, our approach (which is closer to Hammer and Holzman's~\cite{HamHol92}) is not only equivalent to Ding et al.'s but leads to easier interpretations and computations as will be shown in the next paragraphs.
\end{remark}

We have defined an interaction index from an approximation (projection) problem. Conversely, this index characterizes this approximation
problem. Indeed, as the following result shows, the best $k$th approximation of $f\colon\{0,1\}^n\to\R$ is the unique function of $V_k$ that
preserves the interaction index for all the $s$-subsets such that $s\leqslant k$. The non-weighted analogue of this result was established in
\cite{GraMarRou00} for the Banzhaf interaction index $I_{\mathrm{B}}$.

\begin{proposition}\label{prop:Proj-IB}
A function $f_k\in V_k$ is the best $k$th approximation of $f\colon\{0,1\}^n\to\R$ if and only if
$I_{\mathrm{B},\mathbf{p}}(f,S)=I_{\mathrm{B},\mathbf{p}}(f_k,S)$ for all $S\subseteq N$ such that $|S|\leqslant k$.
\end{proposition}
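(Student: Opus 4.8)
The plan is to recast both sides of the stated equivalence in terms of the orthogonal projection of $f$ onto $V_k$ with respect to the inner product $\langle\cdot,\cdot\rangle$, and then exploit the orthonormal basis $B'_k=\{v_T:|T|\leqslant k\}$. The key preliminary observation is that, since $0<p_i<1$ for every $i\in N$, the scalar $\prod_{i\in S}\sqrt{p_i(1-p_i)}$ is nonzero; hence, by the very definition $I_{\mathrm{B},\mathbf{p}}(g,S)=\langle g,v_S\rangle/\prod_{i\in S}\sqrt{p_i(1-p_i)}$, the condition $I_{\mathrm{B},\mathbf{p}}(f,S)=I_{\mathrm{B},\mathbf{p}}(f_k,S)$ is \emph{equivalent} to $\langle f,v_S\rangle=\langle f_k,v_S\rangle$, and this for each fixed $S$ with $|S|\leqslant k$. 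Thus the proposition will follow once we show that an $f_k\in V_k$ is the best $k$th approximation of $f$ if and only if $\langle f,v_S\rangle=\langle f_k,v_S\rangle$ for all $S$ with $|S|\leqslant k$.

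For the "only if" direction, suppose $f_k$ is the best $k$th approximation of $f$. By Theorem~\ref{thm:Coeffk} we have $f_k=\sum_{|T|\leqslant k}\langle f,v_T\rangle\,v_T$, and since $B'_k$ is orthonormal, taking the inner product with $v_S$ (for any $S$ with $|S|\leqslant k$) gives $\langle f_k,v_S\rangle=\langle f,v_S\rangle$. Dividing by $\prod_{i\in S}\sqrt{p_i(1-p_i)}$ yields $I_{\mathrm{B},\mathbf{p}}(f_k,S)=I_{\mathrm{B},\mathbf{p}}(f,S)$.

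For the "if" direction, suppose $f_k\in V_k$ satisfies $I_{\mathrm{B},\mathbf{p}}(f,S)=I_{\mathrm{B},\mathbf{p}}(f_k,S)$, hence $\langle f_k,v_S\rangle=\langle f,v_S\rangle$, for all $S$ with $|S|\leqslant k$. Since $f_k\in V_k$ and $B'_k$ is an orthonormal basis of $V_k$, the expansion of $f_k$ in this basis reads $f_k=\sum_{|T|\leqslant k}\langle f_k,v_T\rangle\,v_T=\sum_{|T|\leqslant k}\langle f,v_T\rangle\,v_T$, which is precisely the best $k$th approximation of $f$ by Theorem~\ref{thm:Coeffk}. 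This completes the equivalence.

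There is essentially no serious obstacle in this argument; the only point deserving (minor) emphasis is that $I_{\mathrm{B},\mathbf{p}}(g,S)$ and $\langle g,v_S\rangle$ determine each other, which is exactly where the hypothesis $p_i\in(0,1)$ enters, so that "$f_k$ preserves the interaction index on every $S$ with $|S|\leqslant k$" is literally the statement "$f_k$ and $f$ have the same coordinates along $B'_k$", and the latter characterizes the orthogonal projection onto $V_k$. One could equivalently run the proof through the orthogonality characterization $\langle f-f_k,g\rangle=0$ for all $g\in V_k$, verified on the spanning set $B'_k$; this gives the same conclusion.
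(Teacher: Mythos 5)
Your proof is correct and follows essentially the same route as the paper: both reduce the equality of interaction indexes to the equality of inner products $\langle f,v_S\rangle=\langle f_k,v_S\rangle$ (equivalently $\langle f-f_k,v_S\rangle=0$) for $|S|\leqslant k$, and then invoke the characterization of the orthogonal projection onto $V_k$ via the orthonormal basis $B'_k$. Your write-up merely spells out the two directions that the paper's one-sentence proof leaves implicit.
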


\begin{proof}
By definition, we have $I_{\mathrm{B},\mathbf{p}}(f,S)=I_{\mathrm{B},\mathbf{p}}(f_k,S)$ if and only if $\langle f-f_k,v_S\rangle =0$ for all
$S\subseteq N$ such that $|S|\leqslant k$, and the latter condition characterizes the projection of $f$ onto $V_k$.
\end{proof}

Since the best $n$th approximation of $f$ is $f$ itself, by (\ref{eq:akS2}) we immediately see that $f$ can be expressed in terms of
$I_{\mathrm{B},\mathbf{p}}$ as
\begin{equation}\label{eq:Approx}
f(\bfx) = \sum_{T\subseteq N} I_{\mathrm{B},\mathbf{p}}(f,T)\, \prod_{i\in T}(x_i-p_i),
\end{equation}
which shows that the map $f\mapsto\{I_{\mathrm{B},\mathbf{p}}(f,S):S\subseteq N\}$ is a linear bijection.

We also have the following representation result, which generalizes the first equalities in (\ref{eq:we7r89}) and (\ref{eq:we7r89a}).

\begin{proposition}\label{prop:d4546dg}
For every $f\colon\{0,1\}^n\to\R$ and every $S\subseteq N$, we have
\begin{equation}\label{eq:IB-DSk56}
I_{\mathrm{B},\mathbf{p}}(f,S)=(D^S \bar{f})(\mathbf{p})=(\Delta^S \bar{f})(\mathbf{p}).
\end{equation}
In particular, $I_{\mathrm{B},\mathbf{p}}(f,\varnothing)= \bar{f}(\mathbf{p}) =\sum_{\bfx\in\{0,1\}^n}w(\bfx)f(\bfx)$.
\end{proposition}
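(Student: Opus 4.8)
The plan is to derive the identity from the expansion \eqref{eq:Approx} of $f$ in terms of the index $I_{\mathrm{B},\mathbf{p}}$, together with the fact recalled in Section~2 that on multilinear polynomials over $[0,1]^n$ the $S$-difference operator $\Delta^S$ acts exactly like the $S$-derivative operator $D^S$. So it suffices to prove the first equality $I_{\mathrm{B},\mathbf{p}}(f,S)=(D^S\bar f)(\mathbf{p})$.

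First I would observe that both sides of \eqref{eq:Approx} are multilinear polynomials in $\bfx$ that agree on $\{0,1\}^n$; since a multilinear polynomial on $[0,1]^n$ is determined by its values at the vertices of the cube, the identity
$$
\bar f(\bfx)=\sum_{T\subseteq N}I_{\mathrm{B},\mathbf{p}}(f,T)\,\prod_{i\in T}(x_i-p_i)
$$
holds on all of $[0,1]^n$, where $\bar f$ is the multilinear extension of $f$. Next I would apply $D^S$ term by term: each monomial $\prod_{i\in T}(x_i-p_i)$ is multilinear, so $D^S$ annihilates it whenever $S\not\subseteq T$ and otherwise turns it into $\prod_{i\in T\setminus S}(x_i-p_i)$. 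Hence $(D^S\bar f)(\bfx)=\sum_{T\supseteq S}I_{\mathrm{B},\mathbf{p}}(f,T)\prod_{i\in T\setminus S}(x_i-p_i)$, and evaluating at $\bfx=\mathbf{p}$ makes every factor $x_i-p_i$ with $i\in T\setminus S$ vanish, so only the term $T=S$ survives and we obtain $(D^S\bar f)(\mathbf{p})=I_{\mathrm{B},\mathbf{p}}(f,S)$. Combined with $\Delta^S\bar f=D^S\bar f$, this gives \eqref{eq:IB-DSk56}.

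Finally, the particular case $S=\varnothing$ follows by taking $D^{\varnothing}\bar f=\bar f$, which yields $I_{\mathrm{B},\mathbf{p}}(f,\varnothing)=\bar f(\mathbf{p})$, and then invoking Remark~\ref{rem:MLE757} (or simply setting $S=\varnothing$ in \eqref{eq:GenBI1}, where the product over $S$ is empty) to recognize $\bar f(\mathbf{p})=\sum_{\bfx\in\{0,1\}^n}w(\bfx)f(\bfx)$.

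I do not expect a genuine obstacle; the only points needing a little care are the reduction from $\{0,1\}^n$ to $[0,1]^n$ and keeping track of which terms of the differentiated sum vanish at $\mathbf{p}$. As an alternative that avoids \eqref{eq:Approx}, one could expand $f=\bar f$ in its M\"obius form \eqref{eq:fMob} inside \eqref{eq:GenBI1}, rewrite the sum over $\bfx$ as an expectation over independent Bernoulli$(p_i)$ variables $X_i$, and use $X_i^2=X_i$ together with independence to check that $E\bigl[\prod_{i\in T}X_i\prod_{i\in S}(X_i-p_i)\bigr]$ vanishes unless $S\subseteq T$ and otherwise equals $\prod_{i\in S}p_i(1-p_i)\prod_{i\in T\setminus S}p_i$; this reproduces $I_{\mathrm{B},\mathbf{p}}(f,S)=\sum_{T\supseteq S}a(T)\prod_{i\in T\setminus S}p_i=(D^S\bar f)(\mathbf{p})$ directly.
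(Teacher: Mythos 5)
Your proof is correct and is essentially the paper's own argument: the authors also start from (\ref{eq:Approx}) and identify $I_{\mathrm{B},\mathbf{p}}(f,T)$ with the coefficients of the Taylor expansion of $\bar f$ at $\mathbf{p}$, which is exactly the ``apply $D^S$ termwise and evaluate at $\mathbf{p}$'' computation you spell out. The only difference is that you make the Taylor-coefficient extraction and the passage from $\{0,1\}^n$ to $[0,1]^n$ explicit, which the paper leaves implicit.
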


\begin{proof}
The result immediately follows from comparing (\ref{eq:Approx}) with the Taylor expansion of $\bar f$ at $\mathbf{p}$. The particular case was
discussed in Remark~\ref{rem:MLE757}.
\end{proof}

\begin{example}
Consider the 3-person majority game defined by
$$
f(x_1,x_2,x_3)=x_1x_2+x_2x_3+x_3x_1-2x_1x_2x_3.
$$
By (\ref{eq:we7r89}) and (\ref{eq:IB-DSk56}), we have $I_{\mathrm{B}}(f,\{i,j\})=0$ and $I_{\mathrm{B},\mathbf{p}}(f,\{i,j\})=1-2p_k$, where $\{i,j,k\}=\{1,2,3\}$. Intuitively, if $p_k$ is close to $1$, then the coalitions containing $k$ are most likely to form. In these coalitions, the presence of only one of the remaining players is sufficient to form a winning coalition, thus explaining the negative interaction between $i$ and $j$. A similar conclusion can be drawn if $p_k$ is close to $0$.
\end{example}

Explicit conversion formulas between the interaction index and the best approximation can be easily derived from the preceding results. On
the one hand, by (\ref{eq:akS}), we have
\begin{equation}\label{eq:sdfd54}
a_k(S)=\sum_{\textstyle{T\supseteq S\atop |T|\leqslant k}}I_{\mathrm{B},\mathbf{p}}(f,T)\, \prod_{i\in T\setminus S}(-p_i)\, ,\qquad
\mbox{for}~|S|\leqslant k.
\end{equation}
On the other hand, by Propositions~\ref{prop:Proj-IB} and \ref{prop:d4546dg} and Equation (\ref{eq:akS2bis}), we also have
\begin{eqnarray*}
I_{\mathrm{B},\mathbf{p}}(f,S) &=& I_{\mathrm{B},\mathbf{p}}(f_k,S) ~=~ (\Delta^S \bar{f}_k)(\mathbf{p})\\
&=& \sum_{\textstyle{T\subseteq N\atop |T|\leqslant k}}a_k(T)\,(\Delta^S \bar{u}_T)(\mathbf{p})\, ,
\end{eqnarray*}
that is, since $\Delta^S \bar{u}_T=\bar{u}_{T\setminus S}$ if $S\subseteq T$ and $0$ otherwise,
\begin{equation}\label{eq:sfhksd45}
I_{\mathrm{B},\mathbf{p}}(f,S)=\sum_{\textstyle{T\supseteq S\atop |T|\leqslant k}}a_k(T)\, \prod_{i\in T\setminus S}p_i\, ,\qquad
\mbox{for}~|S|\leqslant k.
\end{equation}
Taking $k=n$ in (\ref{eq:sfhksd45}), we immediately derive the following expression of $I_{\mathrm{B},\mathbf{p}}(f,S)$ in terms of the M\"obius
transform of $f$:
\begin{equation}\label{eq:sfhfdgfksd45}
I_{\mathrm{B},\mathbf{p}}(f,S)=\sum_{T\supseteq S}a(T)\, \prod_{i\in T\setminus S}p_i\, .
\end{equation}

Combining formulas (\ref{eq:sdfd54}) and (\ref{eq:sfhfdgfksd45}) allows us to express the coefficients $a_k(S)$ explicitly in terms of the M\"obius transform of $f$. We give this expression in the following proposition, which generalizes (\ref{eq:aks1}) and \cite[Theorem 7]{DinLaxCheChe10}.

\begin{proposition}
The best $k$th approximation of $f\colon\{0,1\}^n\to\R$ is given by (\ref{eq:fkMob}), where
\[
a_k(S)=a(S)+(-1)^{k-|S|}\sum_{\textstyle{T\supseteq S\atop |T|> k}}{|T|-|S|-1\choose k-|S|} \bigg(\prod_{i\in T\setminus S}p_i\bigg)\, a(T)\, ,\qquad\mbox{for}~|S|\leqslant k.
\]
\end{proposition}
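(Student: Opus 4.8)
The plan is to derive the claimed closed form for $a_k(S)$ by substituting the M\"obius expression \eqref{eq:sfhfdgfksd45} for $I_{\mathrm{B},\mathbf{p}}$ into the conversion formula \eqref{eq:sdfd54}. First I would write
\[
a_k(S)=\sum_{\textstyle{T\supseteq S\atop |T|\leqslant k}}\bigg(\prod_{i\in T\setminus S}(-p_i)\bigg)\sum_{U\supseteq T}a(U)\prod_{i\in U\setminus T}p_i
\]
and interchange the order of summation so that the outer sum runs over all $U\supseteq S$ with $a(U)$ as a common factor. For a fixed $U\supseteq S$, the inner sum is over all $T$ with $S\subseteq T\subseteq U$ and $|T|\leqslant k$, and each term contributes $(-1)^{|T\setminus S|}\prod_{i\in U\setminus S}p_i$, since the product $\prod_{i\in T\setminus S}(-p_i)\prod_{i\in U\setminus T}p_i$ telescopes to $(-1)^{|T\setminus S|}\prod_{i\in U\setminus S}p_i$ (the $p_i$ for $i\in U\setminus S$ each appear exactly once). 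Thus $\prod_{i\in U\setminus S}p_i$ factors out and I am left with a purely combinatorial coefficient.

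Next I would evaluate that coefficient, namely $\sum_{T:\,S\subseteq T\subseteq U,\,|T|\leqslant k}(-1)^{|T\setminus S|}$. Writing $m=|U\setminus S|$ and grouping the $T$ by $j=|T\setminus S|$, this is $\sum_{j=0}^{\min(k-|S|,m)}(-1)^j\binom{m}{j}$. If $|U|\leqslant k$ the sum is complete and equals $\sum_{j=0}^{m}(-1)^j\binom{m}{j}=[m=0]$, i.e.\ it is $1$ when $U=S$ and $0$ otherwise; this recovers the $a(S)$ term. If $|U|>k$, set $\ell=k-|S|<m$; the standard partial-alternating-sum identity $\sum_{j=0}^{\ell}(-1)^j\binom{m}{j}=(-1)^{\ell}\binom{m-1}{\ell}$ gives the coefficient $(-1)^{k-|S|}\binom{|U|-|S|-1}{k-|S|}$. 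Substituting back, renaming $U$ as $T$, and separating the $U=S$ term from the rest yields exactly
\[
a_k(S)=a(S)+(-1)^{k-|S|}\sum_{\textstyle{T\supseteq S\atop |T|>k}}\binom{|T|-|S|-1}{k-|S|}\bigg(\prod_{i\in T\setminus S}p_i\bigg)a(T),
\]
as claimed; the fact that $f_k$ has the form \eqref{eq:fkMob} is just \eqref{eq:akS2bis}.

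The only genuinely delicate point is the evaluation of the truncated alternating binomial sum $\sum_{j=0}^{\ell}(-1)^j\binom mj$; everything else is bookkeeping. That identity is classical (it follows from the hockey-stick/Pascal recursion $\binom mj=\binom{m-1}{j}+\binom{m-1}{j-1}$ telescoping, or from the generating-function coefficient extraction $[x^\ell](1-x)^m/(1-x)$), so I would simply cite it or dispatch it in one line. I would also note in passing that setting all $p_i=\tfrac12$ recovers \eqref{eq:aks1} and that the formula extends \cite[Theorem~7]{DinLaxCheChe10}, which is the $k$-independent instance. As an alternative route, one could substitute \eqref{eq:akS2} into \eqref{eq:akS} directly using the explicit expansion of $v_T$ in the basis $B_k$, but the path through \eqref{eq:sdfd54} and \eqref{eq:sfhfdgfksd45} keeps the $p_i$-dependence transparent and isolates the combinatorics cleanly.
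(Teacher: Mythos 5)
Your proposal is correct and follows essentially the same route as the paper: combine (\ref{eq:sdfd54}) with (\ref{eq:sfhfdgfksd45}), permute the sums so that $\prod_{i\in T\setminus S}p_i\, a(T)$ factors out, and evaluate the resulting truncated alternating binomial sum. The only difference is that you carry out that inner-sum evaluation explicitly via $\sum_{j=0}^{\ell}(-1)^j\binom{m}{j}=(-1)^{\ell}\binom{m-1}{\ell}$, whereas the paper cites the computation to Ding et al.
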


\begin{proof}
By combining (\ref{eq:sdfd54}) and (\ref{eq:sfhfdgfksd45}) and then permuting the sums, we obtain
\[
a_k(S)=\sum_{T\supseteq S}\,\bigg(\prod_{i\in T\setminus S}p_i\bigg)\, a(T) \sum_{\textstyle{R:S\subseteq R\subseteq T\atop |R|\leqslant k}}\,  (-1)^{|R|-|S|},
\]
where the explicit computation of the inner sum was done in \cite[p.~20]{DinLaxCheChe10}.
\end{proof}

It is important to remember that the special case $\mathbf{p}=\boldsymbol{\frac 12}$ corresponds to the non-weighted approximation
problem investigated first by Hammer and Holzman and for which the index $I_{\mathrm{B},\mathbf{p}}$ reduces to the Banzhaf interaction index
$I_{\mathrm{B}}$. For this reason, we will call the index $I_{\mathrm{B},\mathbf{p}}$ the \emph{weighted Banzhaf interaction index}. Its
expressions in (\ref{eq:GenBI1}) and (\ref{eq:GenBI1a}) provide the following alternative formulas for the Banzhaf interaction index. The second one was found in \cite[Table 3]{GraMarRou00}.

\begin{corollary}
For every $f\colon\{0,1\}^n\to\R$ and every $S\subseteq N$, we have
$$
I_{\mathrm{B}}(f,S)=\frac{1}{2^{n-|S|}}\sum_{\bfx\in \{0,1\}^n}f(\bfx)\prod_{i\in S}(2 x_i-1)=\frac{1}{2^{n-|S|}}\sum_{T\subseteq
N}(-1)^{|S\setminus T|}f(T).
$$
\end{corollary}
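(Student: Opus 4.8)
The plan is to obtain both equalities by simply specializing the two master formulas (\ref{eq:GenBI1}) and (\ref{eq:GenBI1a}) for $I_{\mathrm{B},\mathbf{p}}$ to the uniform vector $\mathbf{p}=\boldsymbol{\frac12}$, using the fact---noted in the paragraph preceding the statement---that in this case $I_{\mathrm{B},\mathbf{p}}$ reduces to $I_{\mathrm{B}}$. For $p_i=\frac12$ one has $w(\bfx)=2^{-n}$ for every $\bfx\in\{0,1\}^n$, $p_i(1-p_i)=\frac14$, and $x_i-p_i=\frac12(2x_i-1)$; these three observations are all that is needed.

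First I would derive the first equality from (\ref{eq:GenBI1}). Substituting the above values, the prefactor $1/\prod_{i\in S}p_i(1-p_i)$ becomes $4^{|S|}$, the factor $w(\bfx)$ contributes $2^{-n}$, and $\prod_{i\in S}(x_i-p_i)$ contributes $2^{-|S|}\prod_{i\in S}(2x_i-1)$. Collecting the powers of $2$ gives the constant $4^{|S|}\,2^{-n}\,2^{-|S|}=2^{|S|}/2^{n}=1/2^{n-|S|}$ in front of $\sum_{\bfx}f(\bfx)\prod_{i\in S}(2x_i-1)$, which is exactly the claimed middle expression.

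Next I would derive the second equality from (\ref{eq:GenBI1a}). With $p_i=\frac12$ the product $\prod_{i\in T\setminus S}p_i\prod_{i\in N\setminus(T\cup S)}(1-p_i)$ equals $2^{-(|T\setminus S|+n-|T\cup S|)}$, and the elementary identities $|T\setminus S|=|T|-|T\cap S|$ and $|T\cup S|=|T|+|S|-|T\cap S|$ give $|T\setminus S|-|T\cup S|=-|S|$, so the exponent equals $n-|S|$ regardless of $T$. Factoring $2^{-(n-|S|)}$ out of the sum over $T$ then yields $\frac{1}{2^{n-|S|}}\sum_{T\subseteq N}(-1)^{|S\setminus T|}f(T)$, as desired.

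There is no genuine obstacle here: the statement is a pure specialization of already-proved formulas, and the only point requiring care is the bookkeeping of the powers of $2$ arising from $w$, from $p_i(1-p_i)$, and from the rescaling $x_i-\frac12=\frac12(2x_i-1)$, together with the trivial cardinality identity $|T\setminus S|-|T\cup S|=-|S|$ that makes the exponent in the second computation independent of $T$.
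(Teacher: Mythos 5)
Your proposal is correct and follows exactly the route the paper intends: the corollary is stated as an immediate specialization of (\ref{eq:GenBI1}) and (\ref{eq:GenBI1a}) to $\mathbf{p}=\boldsymbol{\frac 12}$, and your bookkeeping of the powers of $2$ (including the observation that $|T\setminus S|-|T\cup S|=-|S|$ makes the exponent independent of $T$) is accurate. The paper omits these computations entirely, so your write-up simply supplies the details the authors left implicit.
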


\section{Properties and interpretations}

Most of the interaction indexes defined for games, including the Banzhaf interaction index, share a set of fundamental properties such as
linearity, symmetry, and monotonicity (see \cite{FujKojMar06}). Many of them can also be expressed as expected values of the discrete
derivatives (differences) of their arguments (see for instance (\ref{DiscBanzhaf})). In this section we show that the index
$I_{\mathrm{B},\mathbf{p}}$ fulfills many of these properties.

The first result follows from the very definition of the index.

\begin{proposition}\label{prop:lin56}
For every $S\subseteq N$, the mapping $f\mapsto I_{\mathrm{B},\mathbf{p}}(f,S)$ is linear.
\end{proposition}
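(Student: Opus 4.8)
The plan is to prove linearity directly from the defining formula~(\ref{eq:GenBI1}) for $I_{\mathrm{B},\mathbf{p}}$. Fix $S\subseteq N$. The point is that the right-hand side of~(\ref{eq:GenBI1}),
\[
I_{\mathrm{B},\mathbf{p}}(f,S)= \frac{1}{\prod_{i\in S}p_i(1-p_i)}\sum_{\bfx\in\{0,1\}^n}w(\bfx)f(\bfx)\prod_{i\in S}(x_i-p_i),
\]
is manifestly linear in $f$: the prefactor $1/\prod_{i\in S}p_i(1-p_i)$ does not depend on $f$, and the inner sum is a finite linear combination of the values $f(\bfx)$, with coefficients $w(\bfx)\prod_{i\in S}(x_i-p_i)$ that are independent of $f$.

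Concretely, I would argue as follows. Let $f,g\colon\{0,1\}^n\to\R$ and $\alpha,\beta\in\R$. Using that evaluation $h\mapsto h(\bfx)$ is linear on $\mathcal{G}^N$ for each fixed $\bfx$, we have $(\alpha f+\beta g)(\bfx)=\alpha f(\bfx)+\beta g(\bfx)$, hence
\[
\sum_{\bfx\in\{0,1\}^n}w(\bfx)(\alpha f+\beta g)(\bfx)\prod_{i\in S}(x_i-p_i)
=\alpha\sum_{\bfx}w(\bfx)f(\bfx)\prod_{i\in S}(x_i-p_i)+\beta\sum_{\bfx}w(\bfx)g(\bfx)\prod_{i\in S}(x_i-p_i).
\]
Dividing through by the nonzero constant $\prod_{i\in S}p_i(1-p_i)$ (nonzero because $0<p_i<1$ for all $i$) gives $I_{\mathrm{B},\mathbf{p}}(\alpha f+\beta g,S)=\alpha\, I_{\mathrm{B},\mathbf{p}}(f,S)+\beta\, I_{\mathrm{B},\mathbf{p}}(g,S)$. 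Equivalently, one may phrase this in terms of the inner product: $I_{\mathrm{B},\mathbf{p}}(f,S)=\langle f,v_S\rangle/\prod_{i\in S}\sqrt{p_i(1-p_i)}$, and linearity is inherited from the linearity of $\langle\cdot\,,v_S\rangle$ in its first argument.

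There is essentially no obstacle here; the statement is, as the paper says, immediate from the definition. The only point worth a word is that the normalizing denominator is well defined and nonzero, which is guaranteed by the standing assumption $w\colon\{0,1\}^n\to\left]0,\infty\right[$ forcing $0<p_i<1$. I would keep the proof to one or two sentences, citing~(\ref{eq:GenBI1}).
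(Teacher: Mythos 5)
Your proof is correct and matches the paper's approach: the paper simply remarks that the result ``follows from the very definition of the index,'' and your argument spells out exactly that observation, namely that the right-hand side of~(\ref{eq:GenBI1}) (equivalently, $\langle f,v_S\rangle$ up to a nonzero constant) is a fixed linear combination of the values $f(\bfx)$. Nothing is missing; your one-line version citing~(\ref{eq:GenBI1}) would be entirely adequate.
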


We now provide an interpretation of $I_{\mathrm{B},\mathbf{p}}(f,S)$ as an expected value of the $S$-difference $\Delta^Sf$ of $f$. This
interpretation is a direct generalization of the one obtained for the Banzhaf index $I_{\mathrm{B}}$; see formula (\ref{DiscBanzhaf}). The proof
immediately follows from Proposition~\ref{prop:d4546dg} and thus is omitted.

\begin{proposition}\label{prop:d4546dg56}
For every $f\colon\{0,1\}^n\to\R$ and every $S\subseteq N$, we have
\begin{equation}\label{eq:IB-DSk56fsf}
I_{\mathrm{B},\mathbf{p}}(f,S)=\sum_{\bfx\in\{0,1\}^n}w(\bfx)\,\Delta^S f(\bfx).
\end{equation}
\end{proposition}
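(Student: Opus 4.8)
The plan is to deduce the identity directly from Proposition~\ref{prop:d4546dg}, which already states that $I_{\mathrm{B},\mathbf{p}}(f,S)=(\Delta^S\bar f)(\mathbf{p})$, together with Remark~\ref{rem:MLE757}, which gives the probabilistic reading of evaluation at $\mathbf{p}$ as an expectation against the weights $w$. Concretely, the first step is to observe that the $S$-difference operator $\Delta^S$ commutes with the multilinear-extension operator in the following sense: for a pseudo-Boolean function $f\colon\{0,1\}^n\to\R$, the multilinear extension of the (discrete) $S$-difference $\Delta^S f\colon\{0,1\}^n\to\R$ coincides with the restriction to $\{0,1\}^n$-evaluations of the analytically defined $\Delta^S\bar f$ on $[0,1]^n$. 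This is because $\Delta^{\{i\}}$ acts the same way on the variable $x_i$ whether we think of $f$ combinatorially or of $\bar f$ as a multilinear polynomial, and the multilinear extension is linear and respects substitution in the remaining variables; one iterates over $i\in S$.

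Given that, the second step is simply to evaluate both sides of the desired equation at $\mathbf{p}$ and invoke Remark~\ref{rem:MLE757} applied to the game $\Delta^S f$ in place of $f$: namely $\overline{\Delta^S f}(\mathbf{p})=\sum_{\bfx\in\{0,1\}^n}w(\bfx)\,\Delta^S f(\bfx)=E[\Delta^S f(C)]$. Combining this with Proposition~\ref{prop:d4546dg}, which identifies $I_{\mathrm{B},\mathbf{p}}(f,S)$ with $(\Delta^S\bar f)(\mathbf{p})=\overline{\Delta^S f}(\mathbf{p})$, yields formula~(\ref{eq:IB-DSk56fsf}) at once. This is exactly why the authors remark that the proof "immediately follows from Proposition~\ref{prop:d4546dg} and thus is omitted."

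Alternatively, and perhaps more transparently, one can give a direct computational derivation from~(\ref{eq:GenBI1a}): expanding the discrete operator $\Delta^S f(\bfx)=\sum_{R\subseteq S}(-1)^{|S\setminus R|}f(\bfx\mid x_i=1\ \forall i\in R,\ x_i=0\ \forall i\in S\setminus R)$ and summing against $w(\bfx)=\prod_{i\in N}p_i^{x_i}(1-p_i)^{1-x_i}$, one factors the sum over the coordinates outside $S$ and matches the result term-by-term with the right-hand side of~(\ref{eq:GenBI1a}). The only mildly delicate point in either route is bookkeeping the signs and the product over $N\setminus(T\cup S)$ versus $N\setminus S$; but since Proposition~\ref{prop:d4546dg} is already available, the clean two-line argument above is the one I would actually write, and there is no real obstacle — the content has been front-loaded into Proposition~\ref{prop:d4546dg}.
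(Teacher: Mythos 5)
Your proof is correct and follows the same route the paper intends: the authors omit the proof precisely because it ``immediately follows from Proposition~\ref{prop:d4546dg}'', and your two steps --- identifying $\Delta^S\bar f$ with $\overline{\Delta^S f}$ and then reading $\overline{\Delta^S f}(\mathbf{p})$ as the expectation $\sum_{\bfx}w(\bfx)\,\Delta^S f(\bfx)$ via Remark~\ref{rem:MLE757} --- are exactly the details being suppressed. No issues.
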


Rewriting (\ref{eq:IB-DSk56fsf}) as a sum over subsets, we obtain
\begin{equation}\label{eq:s1f231}
I_{\mathrm{B},\mathbf{p}}(f,S)=\sum_{T\subseteq N}w(T)\,(\Delta^S f)(T)=E[(\Delta^S f)(C)],
\end{equation}
where $C$ denotes a random coalition. Notice that formula (\ref{eq:s1f231}) can also be obtained from (\ref{eq:sfhfdgfksd45}) by using the
random indicator vector $\mathbf{X}=(X_1,\ldots,X_n)$. Indeed, we have
$$
I_{\mathrm{B},\mathbf{p}}(f,S)=\sum_{T\supseteq S} a(T)\, E\bigg[\prod_{i\in T\setminus S}X_i\bigg]=E[\Delta^S f(\mathbf{X})].
$$

\begin{remark}
By combining Propositions~\ref{prop:Proj-IB} and \ref{prop:d4546dg56}, we see that the best $k$th approximation of $f$ is the unique multilinear
polynomial of degree at most $k$ that agrees with $f$ in all average $S$-differences for $|S|\leqslant k$.
\end{remark}

Since $(\Delta^S f)(T)=(\Delta^S f)(T\setminus S)$, we can actually rewrite the sum in (\ref{eq:s1f231}) as a sum over the subsets of
$N\setminus S$. We then obtain the following result, which also generalizes (\ref{DiscBanzhaf}).

\begin{theorem}\label{thm:main654}
For every $f\colon\{0,1\}^n\to\R$ and every $S\subseteq N$, we have
\begin{equation}\label{eq:1df2g1df23g}
I_{\mathrm{B},\mathbf{p}}(f,S)=\sum_{T\subseteq N\setminus S} p_T^S\,(\Delta^Sf)(T),
\end{equation}
where $p_T^S=\Pr(T\subseteq C\subseteq S\cup T)=\prod_{i\in T}p_i\prod_{i\in(N\setminus S)\setminus T}(1-p_i)$. Moreover, we have
\begin{equation}\label{eq:1df2df3g}
\sum_{T\subseteq N\setminus S}\,p_T^S = 1.
\end{equation}
\end{theorem}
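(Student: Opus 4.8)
The statement has two parts. For the first identity \eqref{eq:1df2g1df23g}, the plan is to start from \eqref{eq:s1f231}, namely $I_{\mathrm{B},\mathbf{p}}(f,S)=\sum_{T\subseteq N}w(T)\,(\Delta^S f)(T)$, and use the observation (already noted in the excerpt) that $(\Delta^S f)(T)=(\Delta^S f)(T\setminus S)$, so that the summand depends on $T$ only through $T\setminus S$. I would then partition the subsets $T\subseteq N$ according to their trace $R=T\cap(N\setminus S)$ on $N\setminus S$: for each fixed $R\subseteq N\setminus S$, the sets $T$ with $T\cap(N\setminus S)=R$ are exactly $T=R\cup A$ with $A\subseteq S$ arbitrary. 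Grouping the sum this way gives
\[
I_{\mathrm{B},\mathbf{p}}(f,S)=\sum_{R\subseteq N\setminus S}(\Delta^S f)(R)\sum_{A\subseteq S}w(R\cup A).
\]
The inner sum $\sum_{A\subseteq S}w(R\cup A)$ is precisely $\Pr(C\cap(N\setminus S)=R)=\Pr(R\subseteq C\subseteq S\cup R)$, which equals $\prod_{i\in R}p_i\prod_{i\in(N\setminus S)\setminus R}(1-p_i)=p_R^S$ under the independence assumption; this is a direct factorization of the product form of $w$. Renaming $R$ as $T$ yields \eqref{eq:1df2g1df23g}.

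For the second identity \eqref{eq:1df2df3g}, I would give two quick routes and pick the cleaner one. The probabilistic route: the events $\{C\cap(N\setminus S)=T\}$ for $T\subseteq N\setminus S$ partition the sample space, so their probabilities $p_T^S$ sum to $1$; this needs nothing beyond the fact that $w$ is a probability distribution on $2^N$. The algebraic route, which keeps the proof self-contained even without invoking the probabilistic interpretation: expand the product $\prod_{i\in N\setminus S}\bigl(p_i+(1-p_i)\bigr)=1$ and recognize that the terms of this expansion, indexed by the choice of which factor $p_i$ versus $1-p_i$ is taken from each $i\in N\setminus S$, are exactly the $p_T^S$. Either way the identity is immediate.

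I do not anticipate a serious obstacle here; the only point requiring a little care is the bookkeeping in the reindexing step, specifically verifying that as $A$ ranges over all subsets of $S$ the sets $R\cup A$ are distinct and exhaust $\{T:T\cap(N\setminus S)=R\}$, and that $\sum_{A\subseteq S}w(R\cup A)$ factors correctly into $p_R^S$ using $w(\bfx)=\prod_{i\in N}p_i^{x_i}(1-p_i)^{1-x_i}$. One should also note explicitly that the right-hand side of \eqref{eq:1df2g1df23g} makes sense because $\Delta^S f$, being independent of the coordinates in $S$, is naturally a function on subsets of $N\setminus S$. With these remarks the argument is a few lines.
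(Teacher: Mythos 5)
Your proof is correct and, for the main identity (\ref{eq:1df2g1df23g}), follows essentially the same route as the paper: both start from (\ref{eq:s1f231}), use that $(\Delta^S f)(T)$ depends on $T$ only through $T\cap(N\setminus S)$, group the sum over the trace on $N\setminus S$, and factor the inner sum $\sum_{A\subseteq S}w(R\cup A)=\Pr(R\subseteq C\subseteq R\cup S)$ into $p_R^S$ via independence. The only (minor) divergence is in the normalization (\ref{eq:1df2df3g}): you expand $\prod_{i\in N\setminus S}\bigl(p_i+(1-p_i)\bigr)=1$ (or invoke the partition of the sample space), while the paper instead applies the just-proved formula (\ref{eq:1df2g1df23g}) to $f=u_S$ and uses $I_{\mathrm{B},\mathbf{p}}(u_S,S)=1$; both are immediate and equally valid.
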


\begin{proof}
Partitioning $T\subseteq N$ into $K\subseteq N\setminus S$ and $L\subseteq S$, we can rewrite the sum in (\ref{eq:s1f231}) as
$$
I_{\mathrm{B},\mathbf{p}}(f,S) = \sum_{K\subseteq N\setminus S}(\Delta^S f)(K)\,\sum_{L\subseteq S}w(K\cup L)
$$
where the inner sum is exactly $\Pr(K\subseteq C\subseteq K\cup S)$. Moreover, we have
\begin{eqnarray*}
\lefteqn{\Pr(K\subseteq C\subseteq K\cup S)}\\
&=& \Pr(X_i=1~\forall i\in K~\mbox{and}~X_i=0~\forall i\in (N\setminus S)\setminus K)\\
&=& E\bigg[\prod_{i\in K}X_i\prod_{i\in(N\setminus S)\setminus K}(1-X_i)\bigg]~=~ \prod_{i\in K}p_i\prod_{i\in(N\setminus S)\setminus K}(1-p_i),
\end{eqnarray*}
which proves the first part of the theorem. For the second part, we simply apply (\ref{eq:1df2g1df23g}) to $f=u_S$ to obtain $\sum_{T\subseteq
N\setminus S}\,p_T^S = I_{\mathrm{B},\mathbf{p}}(u_S,S) = 1$.
\end{proof}

\begin{remark}
When $S$ is a singleton, $S=\{i\}$, from (\ref{eq:1df2g1df23g}) we derive the following explicit expression for the weighted Banzhaf power index
$$
I_{\mathrm{B},\mathbf{p}}(f,\{i\})=\sum_{T\subseteq N\setminus\{i\}} \big(w(T)+w(T\cup\{i\})\big)\,\big(f(T\cup\{i\})-f(T)\big).
$$
\end{remark}

Interaction indexes of the form (\ref{eq:1df2g1df23g}) with nonnegative coefficients satisfying property (\ref{eq:1df2df3g}) are called
\emph{probabilistic interaction indexes} (see \cite{FujKojMar06}). These indexes share the following probabilistic interpretation. Suppose that
any coalition $S\subseteq N$ joins a coalition $T\subseteq N\setminus S$ at random with (subjective) probability $p_T^S$. Then the right-hand
side in (\ref{eq:1df2g1df23g}) is simply the expected value of the \emph{marginal interaction}  $(\Delta^Sf)(T)$ (called \emph{marginal
contribution}, if $|S|=1$); see also \cite[{\S}2]{GraMarRou00}.

In the case of the index $I_{\mathrm{B},\mathbf{p}}$, we have the following additional interpretations of $p_T^S$ as conditional probabilities. The proof is straightforward and hence omitted.

\begin{proposition}
For every $S\subseteq N$ and every $T\subseteq N\setminus S$, the coefficient $p_T^S$ defined in (\ref{eq:1df2g1df23g}) satisfies
$$
p_T^S=\Pr(C=S\cup T\mid C\supseteq S)=\Pr(C=T\mid C\subseteq N\setminus S),
$$
where $C$ denotes a random coalition.
\end{proposition}

In terms of the multilinear extension $\bar{f}$ of $f$, we also have the following interpretation of $I_{\mathrm{B},\mathbf{p}}$, which
generalizes the second equalities in (\ref{eq:we7r89}) and (\ref{eq:we7r89a}).
\begin{proposition}\label{prop:jhfh43}
Let $F_1,\ldots,F_n$ be cumulative distribution functions on $[0,1]$. Then
\begin{equation}\label{eq:safda987}
I_{\mathrm{B},\mathbf{p}}(f,S)=\int_{[0,1]^n}(\Delta^S\bar{f})(\mathbf{x})\, dF_1(x_1)\cdots dF_n(x_n)
\end{equation}
for every $f\colon\{0,1\}^n\to\R$ and every $S\subseteq N$ if and only if $p_i=\int_0^1x\, dF_i(x)$ for every $i\in N$.
\end{proposition}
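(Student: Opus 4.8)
The plan is to exploit the fact that both sides of (\ref{eq:safda987}) are linear in $f$, so it suffices to check the claimed equivalence on a spanning set of games, and the unanimity games $u_R$ (equivalently, the monomials $\prod_{i\in R}x_i$, $R\subseteq N$) are the natural choice since their multilinear extensions are $\bar u_R=\prod_{i\in R}x_i$ on $[0,1]^n$ and $\Delta^S\bar u_R=\bar u_{R\setminus S}$ when $S\subseteq R$ and $0$ otherwise. By Proposition~\ref{prop:d4546dg}, we have $I_{\mathrm{B},\mathbf{p}}(u_R,S)=(\Delta^S\bar u_R)(\mathbf{p})$, which is $\prod_{i\in R\setminus S}p_i$ if $S\subseteq R$ and $0$ otherwise. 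Meanwhile the right-hand side of (\ref{eq:safda987}) applied to $f=u_R$ is $\int_{[0,1]^n}(\Delta^S\bar u_R)(\mathbf{x})\,dF_1(x_1)\cdots dF_n(x_n)$, which by the product structure of $\Delta^S\bar u_R$ factorizes as $\prod_{i\in R\setminus S}\int_0^1 x\,dF_i(x)$ when $S\subseteq R$ (and $0$ otherwise). Hence (\ref{eq:safda987}) holds for all $f$ and all $S$ if and only if $\prod_{i\in R\setminus S}p_i=\prod_{i\in R\setminus S}\int_0^1 x\,dF_i(x)$ for all $S\subseteq R\subseteq N$.

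For the ``if'' direction, once $p_i=\int_0^1 x\,dF_i(x)$ for every $i$, the two products agree term by term, so equality holds for every $u_R$ and every $S$, and linearity in $f$ extends it to all games. For the ``only if'' direction, I would simply specialize: taking $R=\{i\}$ and $S=\varnothing$ in the identity $\prod_{i\in R\setminus S}p_i=\prod_{i\in R\setminus S}\int_0^1 x\,dF_i(x)$ gives exactly $p_i=\int_0^1 x\,dF_i(x)$ for each $i\in N$. Thus the equivalence is clean, and the whole argument reduces to the computation of both sides on monomials.

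The only point requiring a little care is the factorization of the integral: one must note that $F_1,\ldots,F_n$ being cumulative distribution functions means $dF_1\cdots dF_n$ is the product measure on $[0,1]^n$, so Fubini applies and the integral of the product $\Delta^S\bar u_R(\mathbf{x})=\prod_{i\in R\setminus S}x_i$ splits into a product of one-dimensional integrals $\int_0^1 x\,dF_i(x)$ over $i\in R\setminus S$ (with the factors for $i\notin R\setminus S$ contributing $\int_0^1 1\,dF_i(x)=1$). This is routine, so I do not expect a genuine obstacle; the main (minor) subtlety is just being careful that the equivalence is required to hold simultaneously for \emph{all} $S$ and all $f$, which is what lets us pick the convenient test cases $R=\{i\}$, $S=\varnothing$ to recover the condition on the $p_i$. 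I would present the proof in two short paragraphs: first the reduction to monomials and the computation of both sides, then the two directions of the equivalence.
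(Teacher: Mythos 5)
Your proof is correct and follows essentially the same route as the paper's: reduce by linearity to the unanimity games, evaluate both sides of (\ref{eq:safda987}) on $u_R$ using $\Delta^S\bar u_R=\bar u_{R\setminus S}$ and the factorization of the product-measure integral, and read off the condition on the $p_i$ from the case of singletons. The only difference is that you spell out the specialization $R=\{i\}$, $S=\varnothing$ explicitly, which the paper leaves implicit.
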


\begin{proof}
By linearity of the index, Equation~(\ref{eq:safda987}) holds for every $f\colon\{0,1\}^n\to\R$ and every $S\subseteq N$ if and only if it holds
for every $f=u_T$, with $T\subseteq N$, and every $S\subseteq N$. Thus this condition is equivalent to
\begin{equation}\label{eq:safsd87}
\prod_{i\in T\setminus S}p_i =\int_{[0,1]^n}\prod_{i\in T\setminus S}x_i\, dF_1(x_1)\cdots dF_n(x_n)
\end{equation}
for every $T\subseteq N$ and every $S\subseteq T$. The result then immediately follows since the right-hand integral in (\ref{eq:safsd87})
reduces to $\prod_{i\in T\setminus S}\int_0^1x_i\, dF_i(x_i)$.
\end{proof}

\begin{remark}
Clearly, the functions $F_1,\ldots,F_n$ in Proposition~\ref{prop:jhfh43} are not uniquely determined by $\mathbf{p}$. For instance, we could choose the power function
$F_i(x)=x^{p_i/(1-p_i)}$ or the one-step function $F_i(x)=\chi_{[p_i,1]}$. We could as well consider the beta distribution with parameters $p_i$
and $1-p_i$.
\end{remark}

We now analyze the behavior of the interaction index $I_{\mathrm{B},\mathbf{p}}$ on some special classes of functions. We continue to identify
pseudo-Boolean functions on $\{0,1\}^n$ with games on $N$ and vice versa.

Recall that a \emph{null player} in a game $f\in\mathcal{G}^N$ is a player $i\in N$ such that $f(T\cup\{i\})=f(T)$ for all $T\subseteq
N\setminus\{i\}$. Equivalently, we have $\Delta^{\{i\}}f(\bfx)=0$ for all $\bfx\in\{0,1\}^n$ and the variable $x_i$ is said to be
\emph{ineffective} for $f$. In this case, we have
$$
f(\bfx)=\sum_{T\subseteq N\setminus\{i\}}(\Delta^Tf)(\mathbf{0})\,\prod_{j\in T}x_j=\sum_{T\subseteq N\setminus\{i\}}a(T)\,\prod_{j\in T}x_j\, ,
$$
where $\mathbf{0}=(0,\ldots,0)$.

Define $I_f=\{i\in N:\mbox{$x_i$ ineffective for $f$}\}$; that is, $I_f$ is the set of null players in $f$. From either (\ref{eq:IB-DSk56fsf}),
(\ref{eq:s1f231}), or (\ref{eq:1df2g1df23g}), we immediately derive the following result, which states that any coalition containing at least
one null player in $f$ has necessarily a zero interaction.

\begin{proposition}\label{prop:jjdsl44}
For every $f\colon\{0,1\}^n\to\R$ and every $S\subseteq N$ such that $S\cap I_f\neq\varnothing$, we have $I_{\mathrm{B},\mathbf{p}}(f,S)=0$.
\end{proposition}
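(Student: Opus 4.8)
The plan is to reduce everything to the representation of $I_{\mathrm{B},\mathbf{p}}(f,S)$ as a weighted average of values of the $S$-difference $\Delta^S f$, as provided by Proposition~\ref{prop:d4546dg56} and Theorem~\ref{thm:main654}. The single substantive point is that whenever $S$ contains a null player, the entire function $\Delta^S f$ is identically zero on $\{0,1\}^n$; once this is in hand, the conclusion is immediate.

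First I would pick a player $i\in S\cap I_f$. By definition of a null player, $x_i$ is ineffective for $f$, i.e.\ $\Delta^{\{i\}}f(\bfx)=0$ for all $\bfx\in\{0,1\}^n$. Using the inductive definition $\Delta^S f=\Delta^{\{i\}}\Delta^{S\setminus\{i\}}f$ together with the commutativity of the single-variable difference operators $\Delta^{\{i\}}$ and $\Delta^{\{j\}}$ (which act on distinct coordinates), one can peel off the factor corresponding to $i$ and write
\[
\Delta^S f=\Delta^{S\setminus\{i\}}\bigl(\Delta^{\{i\}}f\bigr)=\Delta^{S\setminus\{i\}}0=0 .
\]
Hence $\Delta^S f$ is the zero function.

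Then I would substitute this into any one of the three formulas (\ref{eq:IB-DSk56fsf}), (\ref{eq:s1f231}), or (\ref{eq:1df2g1df23g}): each expresses $I_{\mathrm{B},\mathbf{p}}(f,S)$ as a finite sum of terms of the form $($weight$)\cdot(\Delta^S f)(\cdot)$, and since every factor $(\Delta^S f)(\cdot)$ vanishes, the sum equals $0$. This yields $I_{\mathrm{B},\mathbf{p}}(f,S)=0$, as claimed.

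I do not expect any real obstacle; the proof is essentially a one-line consequence of the averaging formulas already established. The only step worth a sentence of justification is that the $S$-difference operator may be factored so that $\Delta^{\{i\}}$ is applied first, which is exactly the commutativity of the elementary finite-difference operators in distinct variables, immediate from their definition.
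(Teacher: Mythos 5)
Your proof is correct and follows exactly the route the paper intends: the paper simply asserts that the result follows immediately from any of (\ref{eq:IB-DSk56fsf}), (\ref{eq:s1f231}), or (\ref{eq:1df2g1df23g}), and the step you spell out --- that $\Delta^S f\equiv 0$ once $S$ contains a null player, via commutativity of the single-variable difference operators --- is precisely the omitted justification. No issues.
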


Recall also that a \emph{dummy player} in a game $f\in\mathcal{G}^N$ is a player $i\in N$ such that $f(T\cup\{i\})=f(T)+f(\{i\})-f(\varnothing)$
for all $T\subseteq N\setminus\{i\}$. 
We say that a coalition $S\subseteq N$ is \emph{dummy} in $f\in\mathcal{G}^N$ if $f(R\cup T)=f(R)+f(T)-f(\varnothing)$ for every $R\subseteq S$
and every $T\subseteq N\setminus S$. Thus a coalition $S$ and its complement $N\setminus S$ are simultaneously dummy in any game
$f\in\mathcal{G}^N$.

The following proposition gives an immediate interpretation of this definition.

\begin{proposition}\label{prop:dsfa89}
A coalition $S\subseteq N$ is dummy in a game $f\in\mathcal{G}^N$ if and only if there exist games $f_S,f_{N\setminus S}\in\mathcal{G}^N$ such
that $I_{f_S}\supseteq N\setminus S$, $I_{f_{N\setminus S}}\supseteq S$ and $f=f_S+f_{N\setminus S}$.
\end{proposition}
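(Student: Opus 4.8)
The plan is to unwind the two definitions and exhibit an explicit additive decomposition. Recall that the condition $I_g\supseteq A$ means that every variable $x_i$ with $i\in A$ is ineffective for $g$, i.e., $g$ depends only on the variables indexed by $N\setminus A$; equivalently, as a set function, $g(U)=g(U\cap(N\setminus A))$ for every $U\subseteq N$. So the statement to be proved is that $S$ is dummy in $f$ if and only if $f=f_S+f_{N\setminus S}$ where $f_S$ depends only on the variables in $S$ and $f_{N\setminus S}$ depends only on the variables in $N\setminus S$.

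For the sufficiency direction I would argue directly. Assuming such a decomposition exists, I would take $R\subseteq S$ and $T\subseteq N\setminus S$ and evaluate $f(R\cup T)=f_S(R\cup T)+f_{N\setminus S}(R\cup T)$, using $f_S(R\cup T)=f_S(R)$ and $f_{N\setminus S}(R\cup T)=f_{N\setminus S}(T)$ (each piece ignores the coordinates lying in the other block). Doing the same for $f(R)$, $f(T)$ and $f(\varnothing)$ and combining the four resulting identities yields $f(R\cup T)=f(R)+f(T)-f(\varnothing)$, which is exactly the dummy condition.

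For the necessity direction the only real task is to produce the decomposition; once guessed, the verification is routine. The natural candidate is obtained by ``freezing to $0$'' the coordinates of the opposite block: set $f_S(U)=f(U\cap S)$ and $f_{N\setminus S}(U)=f(U\cap(N\setminus S))-f(\varnothing)$ for all $U\subseteq N$. By construction $f_S$ does not depend on the variables in $N\setminus S$ and $f_{N\setminus S}$ does not depend on the variables in $S$, so $I_{f_S}\supseteq N\setminus S$ and $I_{f_{N\setminus S}}\supseteq S$. It then remains to check $f_S+f_{N\setminus S}=f$: for $U\subseteq N$, write $R=U\cap S$ and $T=U\cap(N\setminus S)$, so that $U=R\cup T$ with $R\subseteq S$ and $T\subseteq N\setminus S$, and then $f_S(U)+f_{N\setminus S}(U)=f(R)+f(T)-f(\varnothing)=f(R\cup T)=f(U)$ by the dummy hypothesis.

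I expect the only subtle point to be the bookkeeping of the additive constant: the correction term $-f(\varnothing)$ in the definition of $f_{N\setminus S}$ is needed so that $f(\varnothing)$ is not counted twice, and correspondingly the decomposition is not unique (a constant may be transferred from $f_S$ to $f_{N\setminus S}$), which is consistent with the symmetry between $S$ and $N\setminus S$ already noted just before the statement. Alternatively, one could phrase the necessity proof through the M\"obius transform, observing that the dummy condition forces $a(T)=0$ whenever $T$ meets both $S$ and $N\setminus S$ and then splitting $\sum_T a(T)u_T$ according to whether $T\subseteq S$ or $T\subseteq N\setminus S$; but the set-function argument above is shorter and avoids having to re-derive that vanishing property.
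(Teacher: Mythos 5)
Your proof is correct and follows essentially the same route as the paper: the necessity direction uses the identical decomposition $f_S(U)=f(U\cap S)$, $f_{N\setminus S}(U)=f(U\setminus S)-f(\varnothing)$, and the sufficiency is the same direct verification the paper leaves to the reader. You have merely written out the routine checks in full.
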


\begin{proof}
For the necessity, just set $f_S(T)=f(T\cap S)$ and $f_{N\setminus S}(T)=f(T\setminus S)-f(\varnothing)$. The sufficiency can be checked
directly.
\end{proof}

Thus Proposition~\ref{prop:dsfa89} states that a coalition $S\subseteq N$ is dummy in $f\in\mathcal{G}^N$ if and only if $f$ is of the form
$$
f(\bfx)=\sum_{T\subseteq S}a(T)\,\prod_{i\in T}x_i+\sum_{\textstyle{T\subseteq N\setminus S\atop T\neq\varnothing}}a(T)\,\prod_{i\in T}x_i\, .
$$

The following result expresses the natural idea that the interaction for coalitions that are properly partitioned by a dummy coalition must be
zero. It is an immediate consequence of Propositions~\ref{prop:lin56}, \ref{prop:jjdsl44}, and \ref{prop:dsfa89}.

\begin{proposition}\label{prop:7dsad7f}
If a coalition $S\subseteq N$ is dummy in a game $f\in\mathcal{G}^N$, then for every coalition $K\subseteq N$ such that $K\cap S\neq\varnothing$
and $K\setminus S\neq\varnothing$, we have $I_{\mathrm{B},\mathbf{p}}(f,K)=0$.
\end{proposition}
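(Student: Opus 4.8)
The plan is to combine the decomposition of a dummy coalition provided by Proposition~\ref{prop:dsfa89} with the vanishing property for null players in Proposition~\ref{prop:jjdsl44} and the linearity established in Proposition~\ref{prop:lin56}. Since the statement is explicitly flagged as an immediate consequence of these three results, the proof is a short assembly rather than a genuine computation.

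First I would invoke Proposition~\ref{prop:dsfa89}: because $S$ is dummy in $f$, there exist games $f_S,f_{N\setminus S}\in\mathcal{G}^N$ with $I_{f_S}\supseteq N\setminus S$ and $I_{f_{N\setminus S}}\supseteq S$ such that $f=f_S+f_{N\setminus S}$. Applying linearity (Proposition~\ref{prop:lin56}) then gives
$$
I_{\mathrm{B},\mathbf{p}}(f,K)=I_{\mathrm{B},\mathbf{p}}(f_S,K)+I_{\mathrm{B},\mathbf{p}}(f_{N\setminus S},K).
$$

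Next I would use the hypotheses on $K$ to kill each summand. Choose $i\in K\cap S$ and $j\in K\setminus S$, which exist precisely because $K\cap S\neq\varnothing$ and $K\setminus S\neq\varnothing$. Then $j\in N\setminus S\subseteq I_{f_S}$, so $K\cap I_{f_S}\neq\varnothing$ and Proposition~\ref{prop:jjdsl44} yields $I_{\mathrm{B},\mathbf{p}}(f_S,K)=0$. Symmetrically, $i\in S\subseteq I_{f_{N\setminus S}}$, so $K\cap I_{f_{N\setminus S}}\neq\varnothing$ and Proposition~\ref{prop:jjdsl44} yields $I_{\mathrm{B},\mathbf{p}}(f_{N\setminus S},K)=0$. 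Substituting the two zero terms into the displayed identity gives $I_{\mathrm{B},\mathbf{p}}(f,K)=0$.

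There is essentially no obstacle: the only point requiring care is the bookkeeping of which players are null in which summand, i.e.\ matching the condition ``$K$ meets both blocks of the partition $\{S,N\setminus S\}$'' with the two applications of Proposition~\ref{prop:jjdsl44}. Everything else is forced.
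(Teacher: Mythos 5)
Your proof is correct and is exactly the argument the paper intends: it declares the proposition an immediate consequence of Propositions~\ref{prop:lin56}, \ref{prop:jjdsl44}, and \ref{prop:dsfa89}, and you have assembled those three results in precisely the expected way, with the right bookkeeping of which block of $\{S,N\setminus S\}$ supplies the null player for each summand.
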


We also have the following result, which immediately follows from Proposition~\ref{prop:d4546dg56}.

\begin{proposition}\label{prop:dafdsf2}
If $f\colon\{0,1\}^n\to\R$ is $S$-increasing for some $S\subseteq N$ (i.e., $\Delta^Sf(\bfx)\geqslant 0$ for all $\bfx\in\{0,1\}^n$), then
$I_{\mathrm{B},\mathbf{p}}(f,S)\geqslant 0$.
\end{proposition}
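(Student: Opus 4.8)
The statement to be proved is Proposition~\ref{prop:dafdsf2}: if $\Delta^S f(\bfx)\geqslant 0$ for all $\bfx\in\{0,1\}^n$, then $I_{\mathrm{B},\mathbf{p}}(f,S)\geqslant 0$. The plan is to invoke Proposition~\ref{prop:d4546dg56} directly, which gives the representation
\[
I_{\mathrm{B},\mathbf{p}}(f,S)=\sum_{\bfx\in\{0,1\}^n}w(\bfx)\,\Delta^S f(\bfx).
\]
First I would observe that, by hypothesis, every term $\Delta^S f(\bfx)$ in this sum is nonnegative. Second, I would recall that the weights $w(\bfx)$ are strictly positive (indeed $w$ takes values in $\left]0,\infty\right[$ by the standing assumption on the weight function, and in the independence setting $w(\bfx)=\prod_{i\in N}p_i^{x_i}(1-p_i)^{1-x_i}>0$ since $0<p_i<1$). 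Hence the sum is a nonnegative linear combination of nonnegative reals, so it is nonnegative.

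There is essentially no obstacle here: the result is an immediate corollary of the expected-value representation already established, combined with positivity of the weights. One could equally well cite formula~(\ref{eq:1df2g1df23g}) from Theorem~\ref{thm:main654}, which expresses $I_{\mathrm{B},\mathbf{p}}(f,S)$ as $\sum_{T\subseteq N\setminus S}p_T^S(\Delta^S f)(T)$ with $p_T^S\geqslant 0$ — this is the more natural route if one wants to emphasize the ``probabilistic'' character of the index, since it exhibits $I_{\mathrm{B},\mathbf{p}}(f,S)$ as an expectation of $\Delta^S f$ over coalitions and expectations of nonnegative random variables are nonnegative. Either way the proof is a one-line deduction, so I expect the paper simply to note that it follows from Proposition~\ref{prop:d4546dg56} and possibly to omit the details, exactly as was done for the closely related Propositions~\ref{prop:jjdsl44} and \ref{prop:7dsad7f}.
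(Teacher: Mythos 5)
Your proof is correct and matches the paper exactly: the authors state that the result ``immediately follows from Proposition~\ref{prop:d4546dg56}'', which is precisely your argument of combining the expected-value representation $I_{\mathrm{B},\mathbf{p}}(f,S)=\sum_{\bfx}w(\bfx)\,\Delta^S f(\bfx)$ with the positivity of the weights. Your anticipation that the paper omits the details was also right.
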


We end this section by describing the weighted Banzhaf interaction indexes that are symmetric. An interaction index $I_{\mathrm{B},\mathbf{p}}$ is said to be \emph{symmetric} (see \cite{GraRou99}) if
$I_{\mathrm{B},\mathbf{p}}(\pi(f),\pi(S))=I_{\mathrm{B},\mathbf{p}}(f,S)$ for every function $f\colon\{0,1\}^n\to\R$, every subset $S\subseteq
N$, and every permutation $\pi$ on $N$, where $\pi(f)$ denotes the function defined by $\pi(f)(x_1,\ldots,x_n)=f(x_{\pi(1)},\ldots,x_{\pi(n)})$.

\begin{proposition}\label{prop:fas8d7}
The index $I_{\mathrm{B},\mathbf{p}}$ is symmetric if and only if the function $w$ is symmetric (i.e., $p_1=\cdots = p_n$).
\end{proposition}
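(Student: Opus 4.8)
The plan is to prove both implications separately, using the explicit formula~(\ref{eq:1df2g1df23g}) together with the behavior of the coefficients $p_T^S$ under permutations.

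First I would establish the easy direction: if $w$ is symmetric, then $p_i = \sum_{S\ni i} w(S)$ does not depend on $i$, so $p_1 = \cdots = p_n$, and conversely equal $p_i$'s force $w(\bfx) = \prod_i p^{x_i}(1-p)^{1-x_i}$ to be symmetric. Given this, I would show symmetry of the index directly from~(\ref{eq:1df2g1df23g}): for any permutation $\pi$, one checks that $(\Delta^S f)(T) = (\Delta^{\pi(S)}\pi^{-1}(f))(\pi(T))$ and that $p_{\pi(T)}^{\pi(S)} = p_T^S$ when all $p_i$ are equal (since then $p_T^S = p^{|T|}(1-p)^{n-|S|-|T|}$ depends only on cardinalities). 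Re-indexing the sum in~(\ref{eq:1df2g1df23g}) for $I_{\mathrm{B},\mathbf{p}}(\pi(f),\pi(S))$ by the substitution $T \mapsto \pi^{-1}(T)$ then yields $I_{\mathrm{B},\mathbf{p}}(f,S)$.

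For the converse, I would argue by contraposition: assume $p_j \neq p_k$ for some $j,k\in N$ and exhibit a game and a coalition witnessing the failure of symmetry. The natural choice is the singleton coalition and a one-variable game. Take $f = u_{\{j\}}$, so that $\Delta^{\{j\}} f \equiv 1$ and $\Delta^{\{k\}} f = u_{\{j\}}$; then by Proposition~\ref{prop:d4546dg} (or~(\ref{eq:IB-DSk56fsf})) we get $I_{\mathrm{B},\mathbf{p}}(f,\{j\}) = 1$ while $I_{\mathrm{B},\mathbf{p}}(f,\{k\}) = E[X_j] = p_j$. Now let $\pi$ be the transposition exchanging $j$ and $k$, so that $\pi(f) = u_{\{k\}}$ and $\pi(\{j\}) = \{k\}$. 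Symmetry would demand $I_{\mathrm{B},\mathbf{p}}(\pi(f),\pi(\{j\})) = I_{\mathrm{B},\mathbf{p}}(f,\{j\})$, i.e. $I_{\mathrm{B},\mathbf{p}}(u_{\{k\}},\{k\}) = 1$; but by the same computation $I_{\mathrm{B},\mathbf{p}}(u_{\{k\}},\{k\}) = 1$ as well, which is consistent, so this particular test is not sharp enough. Instead I would compare $I_{\mathrm{B},\mathbf{p}}(u_{\{j\}},\{k\}) = p_j$ with $I_{\mathrm{B},\mathbf{p}}(\pi(u_{\{j\}}),\pi(\{k\})) = I_{\mathrm{B},\mathbf{p}}(u_{\{k\}},\{j\}) = p_k$: since $p_j \neq p_k$, symmetry fails.

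The main obstacle is choosing the witnessing game in the converse so that the two sides of the symmetry identity genuinely differ; as the discussion above shows, one must pick a game whose nonzero interaction index involves a coordinate $p_i$ that actually gets moved by the permutation, which is why $I_{\mathrm{B},\mathbf{p}}(u_{\{j\}},\{k\}) = p_j$ (a proper interaction between two players, using formula~(\ref{eq:sfhfdgfksd45})) rather than a power index is the right quantity to test. Everything else is routine bookkeeping with the product formula for $p_T^S$ and the re-indexing of sums by $\pi$.
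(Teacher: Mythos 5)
Your forward direction is sound and is essentially the paper's argument: with $p_1=\cdots=p_n=p$ the coefficients $p_T^S=p^{|T|}(1-p)^{n-|S|-|T|}$ in (\ref{eq:1df2g1df23g}) depend only on cardinalities, and re-indexing the sum by $\pi$ gives symmetry (the paper phrases this by saying $I_{\mathrm{B},\mathbf{p}}$ becomes a cardinal-probabilistic index, hence symmetric).

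The converse, however, contains a genuine computational error that breaks the argument. You claim $\Delta^{\{k\}}u_{\{j\}}=u_{\{j\}}$ and hence $I_{\mathrm{B},\mathbf{p}}(u_{\{j\}},\{k\})=E[X_j]=p_j$. But $u_{\{j\}}(\bfx)=x_j$ does not depend on $x_k$, so $\Delta^{\{k\}}u_{\{j\}}=0$ identically; equivalently, $k$ is a null player in $u_{\{j\}}$, and Proposition~\ref{prop:jjdsl44} gives $I_{\mathrm{B},\mathbf{p}}(u_{\{j\}},\{k\})=0$. (One also sees this from (\ref{eq:sfhfdgfksd45}): the only nonzero M\"obius coefficient of $u_{\{j\}}$ sits at $\{j\}$, which does not contain $\{k\}$.) Your final test therefore compares $0$ with $0$ and detects nothing --- exactly the failure mode you correctly diagnosed for your first attempt with the power index. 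The identity you actually want is $\Delta^{\{k\}}u_{\{j,k\}}=u_{\{j\}}$: take the two-player unanimity game $f=u_{\{j,k\}}$, which is invariant under the transposition $\pi$ exchanging $j$ and $k$. Then (\ref{eq:sfhfdgfksd45}) (or (\ref{eq:IB-DSk56})) gives $I_{\mathrm{B},\mathbf{p}}(u_{\{j,k\}},\{k\})=p_j$ and $I_{\mathrm{B},\mathbf{p}}(u_{\{j,k\}},\{j\})=p_k$, and since $\pi(f)=f$ and $\pi(\{k\})=\{j\}$, symmetry forces $p_j=p_k$. This is precisely the witness the paper uses; with that substitution your proof goes through.
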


\begin{proof}
If $w$ is symmetric, then the coefficients $p_T^S$ in (\ref{eq:1df2g1df23g}) depend only on $\mathbf{p}$, $|T|$, and $|S|$. Therefore the index
$I_{\mathrm{B},\mathbf{p}}$ is a cardinal-probabilistic index (see \cite{FujKojMar06}), which is symmetric. Conversely, if
$I_{\mathrm{B},\mathbf{p}}$ is symmetric, then, by (\ref{eq:IB-DSk56}), we have
$$
p_i=I_{\mathrm{B},\mathbf{p}}(u_{\{i,j\}},\{j\})=I_{\mathrm{B},\mathbf{p}}(u_{\{i,j\}},\{i\})=p_j
$$
for every $i,j\in N$, $i\neq j$, and hence $w$ is also symmetric.
\end{proof}


By Proposition~\ref{prop:fas8d7}, we immediately see that the Banzhaf interaction index $I_{\mathbf{B}}=I_{\mathbf{B},\boldsymbol{1/2}}$ is
symmetric. Considering the limiting case $\mathbf{p}=\mathbf{0}$, we also see that the M\"obius transform of $f$ (i.e., $a=I_{\mathbf{B},\boldsymbol{0}}$)
can be regarded as a symmetric weighted Banzhaf interaction index.

\section{Related indexes}

In this final section, we establish interesting links between the weighted Banzhaf interaction index and the Banzhaf and Shapley interaction indexes, which provide new interpretations of the latter indexes. We also introduce a normalized version
of the weighted Banzhaf index to compare interactions from different functions (games) and to compute the coefficient of determination of the best $k$th approximations.
\subsection{Links with the Banzhaf and Shapley indexes}
Since the mapping $f\mapsto I_{\mathrm{B},\mathbf{p}}(f,\cdot)$ is a bijection, we can find conversion formulas between $f$, its M\"obius
transform $a$, and $I_{\mathrm{B},\mathbf{p}}(f,\cdot)$.

The conversion from $a$ to $I_{\mathrm{B},\mathbf{p}}(f,\cdot)$ is given in (\ref{eq:sfhfdgfksd45}). From (\ref{eq:sdfd54}), we immediately
obtain the conversion from $I_{\mathrm{B},\mathbf{p}}(f,\cdot)$ to $a$, namely
\begin{equation}\label{eq:s1f3sd}
a(S)=\sum_{T\supseteq S}I_{\mathrm{B},\mathbf{p}}(f,T)\, \prod_{i\in T\setminus S}(-p_i).
\end{equation}

By combining (\ref{eq:sfhfdgfksd45}) and (\ref{eq:s1f3sd}), we easily obtain a conversion
formula from $I_{\mathrm{B},\mathbf{p}}(f,\cdot)$ to $I_{\mathrm{B},\mathbf{p}'}(f,\cdot)$ for every $\mathbf{p}'\in\left]0,1\right[^n$, namely
\begin{equation}\label{eq:dff4df}
I_{\mathrm{B},\mathbf{p}'}(f,S) = \sum_{T\supseteq S}I_{\mathrm{B},\mathbf{p}}(f,T)\, \prod_{i\in T\setminus S}(p'_i-p_i)\, .
\end{equation}

Now, as already discussed, the index $I_{\mathrm{B}}$ can also be expressed in terms of $I_{\mathrm{B},\mathbf{p}}$ simply by setting
$\mathbf{p}=\boldsymbol{\frac 12}$. However, combining (\ref{eq:we7r89}) with (\ref{eq:IB-DSk56}), we also obtain the following alternative
expression
\begin{equation}\label{eq:dfa89}
I_{\mathrm{B}}(f,S)=\int_{[0,1]^n}I_{\mathrm{B},\mathbf{p}}(f,S) \, d\mathbf{p}.
\end{equation}
Equation (\ref{eq:dfa89}) can be interpreted as follows. Suppose that the players behave independently of each other to form coalitions, each
player $i$ with probability $p_i\in\left]0,1\right[$, but this probability is not known a priori. Then, to define an interaction index, it is
natural to consider the average (center of mass) of the weighted indexes over all possibilities of choosing the probabilities.
Equation~(\ref{eq:dfa89}) shows that we then obtain the Banzhaf interaction index.

The \emph{Shapley interaction index} \cite{GraMarRou00,GraRou99} of a coalition $S\subseteq N$ in a game $f\in\mathcal{G}^N$ is defined by
\begin{equation}\label{eq:IggBI23}
I_\mathrm{Sh}(f,S)=\sum_{T\supseteq S} \frac{a(T)}{|T|-|S|+1} = \int_0^1(\Delta^S\bar{f})(x,\ldots,x)\, dx,
\end{equation}
where the set function $a\colon 2^N\to\R$ is the M\"obius transform of $f$.

Combining (\ref{eq:IB-DSk56}) with (\ref{eq:IggBI23}), we obtain an interesting expression of $I_{\mathrm{Sh}}$ in terms of
$I_{\mathrm{B},\mathbf{p}}$, namely
\begin{equation}\label{eq:dfa891}
I_{\mathrm{Sh}}(f,S)=\int_0^1I_{\mathrm{B},(p,\ldots,p)}(f,S) \, dp.
\end{equation}

Here, the players still behave independently of each other to form coalitions but with the same probability $p$. The integral in
(\ref{eq:dfa891}) simply represents the average value of the weighted indexes over all the possible probabilities.

\begin{remark}
\begin{enumerate}
\item[(a)] Formulas (\ref{eq:sfhfdgfksd45}) and (\ref{eq:s1f3sd}) clearly generalize the conversion formulas between $I_{\mathrm{B}}$ and $a$ given in \cite[p.~175]{GraMarRou00}.

\item[(b)] Expressions of power indexes as integrals similar to (\ref{eq:IggBI23}) and (\ref{eq:dfa891}) were proposed and investigated by Straffin~\cite{Stra88}.

\item[(c)] Every cardinal-probabilistic index \cite{FujKojMar06} can be expressed as an integral of $I_{\mathrm{B},(p,\ldots,p)}$ with respect to some distribution function (see \cite[Theorem~4.4]{FujKojMar06}).
\end{enumerate}
\end{remark}

\subsection{Normalized index and coefficients of determination}

We have seen that the interaction index $I_{\mathrm{B},\mathbf{p}}$ is a linear map. This implies that it cannot be considered as an absolute
interaction index but rather as a relative index constructed to assess and compare interactions for a \emph{given} function.

If we want to compare interactions for \emph{different} functions, we need to consider an absolute (normalized) interaction index. Such an index
can be defined as follows. Considering again $2^N$ as a probability space with respect to the measure $w$, we see that, for a nonempty subset
$S\subseteq N$, the index $I_{\mathrm{B},\mathbf{p}}(f,S)$ is the covariance of the random variables $f$ and $v_S/\prod_{i\in
S}\sqrt{p_i(1-p_i)}$. It is then natural to consider the Pearson correlation coefficient instead of the covariance.

\begin{definition}
The \emph{normalized interaction index} is the mapping $$r\colon \{f\colon\{0,1\}^n\to\R :~\mbox{$f$ is non constant}\}\times
(2^N\setminus\{\varnothing\})\to\R$$ defined by
$$
r(f,S)=\frac{I_{\mathrm{B},\mathbf{p}}(f,S)}{\sigma(f)}\prod_{i\in S}\sqrt{p_i(1-p_i)}=\Big\langle\frac{f-E(f)}{\sigma(f)}\, ,v_S\Big\rangle\, ,
$$
where $E(f)$ and $\sigma(f)$ are the expectation and the standard deviation of $f$, respectively, when $f$ is regarded as a random variable.
\end{definition}

From this definition it follows that $-1\leqslant r(f,S)\leqslant 1$. Moreover, this index remains unchanged under interval scale
transformations, that is, $r(af+b,S)=r(f,S)$ for all $a>0$ and $b\in\R$.

\begin{remark}
By definition of the normalized interaction index, for every nonempty $S\subseteq N$, we have the inequality
$$
|I_{\mathrm{B},\mathbf{p}}(f,S)|\leqslant \frac{\sigma(f)}{\prod_{i\in S}\sqrt{p_i(1-p_i)}}\, .
$$
The equality holds if and only if there exist $a,b\in\R$ such that $f=a\,v_S+b$.
\end{remark}

The normalized index is also useful to compute the \emph{coefficient of determination} $R^2_k(f)=\sigma^2(f_k)/\sigma^2(f)$ of the best $k$th approximation of $f$ (assuming $f$ nonconstant). Since
$E(f_k)=I_{\mathrm{B},\mathbf{p}}(f_k,\varnothing)=I_{\mathrm{B},\mathbf{p}}(f,\varnothing)=E(f)$ (see Proposition~\ref{prop:Proj-IB}), by
(\ref{eq:akS2}), we obtain
\begin{eqnarray*}
R^2_k(f) &=& \frac{1}{\sigma^2(f)}\,\|f_k-E(f_k)\|^2\\
&=& \frac{1}{\sigma^2(f)}\sum_{\textstyle{T\subseteq N\atop 1\leqslant |T|\leqslant k}}\langle f,v_T\rangle^2=\sum_{\textstyle{T\subseteq N\atop
1\leqslant |T|\leqslant k}}r(f,T)^2.
\end{eqnarray*}

\section*{Acknowledgments}

This research is supported by the internal research project F1R-MTH-PUL-09MRDO of the University of Luxembourg.





\bibliographystyle{elsarticle-num}



\end{document}